\documentclass[10pt]{amsart}

\usepackage[margin=1in]{geometry}
\usepackage{amsmath,amssymb,amsthm,mathtools}
\usepackage{tikz-cd}
\usepackage{microtype}
\usepackage[colorlinks=true,hyperindex,linkcolor=magenta,pagebackref,citecolor=cyan]{hyperref}

\theoremstyle{plain}
\newtheorem{theorem}{Theorem}[section]
\newtheorem{lemma}[theorem]{Lemma}
\newtheorem{proposition}[theorem]{Proposition}
\newtheorem{corollary}[theorem]{Corollary}
\theoremstyle{definition}

\newtheorem{question}[theorem]{Question}
\newtheorem{remark}[theorem]{Remark}

\title{Local cohomology modules with nonclosed support}
\author{Bhargav Bhatt}
\address{\begin{tabular}[t]{@{}l@{}} School of Mathematics, Institute for Advanced Study\\ Department of Mathematics, Princeton University \end{tabular}}
\email{bhargav.bhatt@gmail.com}
\author{Linquan Ma}
\address{Department of Mathematics, Purdue University, West Lafayette, IN 47907, USA}
\email{ma326@purdue.edu}

\begin{document}

\begin{abstract}
We construct noetherian rings admitting local cohomology modules with nonclosed support, or equivalently
with infinitely many minimal primes; this answers a question of Huneke--Lyubeznik. 
\end{abstract}

\maketitle

\section{Introduction}

For an ideal $I$ in a commutative ring $R$, Grothendieck's 1961 seminar
\cite{Har67} introduced local cohomology $H^*_I(-)$ of $R$-modules. The
construction provided a non-graded cohomological counterpart of Serre's
dictionary \cite{Ser55} relating coherent sheaves on projective varieties to
graded modules; it was developed further in SGA2 \cite{Gro68}, where it was
applied to Lefschetz theorems for Picard groups and fundamental groups. Since then,
local cohomology has become a central tool in the study of cohomological dimension
and vanishing theorems, (e.g., from the early work \cite{Har68,Ogus73} to the recent paper \cite{MustataPopaLocalCoh}). 

The $R$-modules $H^*_I(R)$ are seldom finitely generated, even in nice
situations. Nevertheless, beginning with Grothendieck's work, much effort has
been expended on finding ``hidden finiteness'' properties of these modules,
extending Serre's finiteness theorems for coherent cohomology of projective
varieties. Some of Grothendieck's early conjectures in this direction were
disproved by Hartshorne \cite{Har70}. On the positive side, Faltings
\cite{Fal78,Fal81} proved the existence of a nonzero annihilator ideal for
local cohomology modules in certain situations. In \cite{Hun92}, Huneke asked
whether local cohomology modules always have finitely many associated prime
ideals. The answer is affirmative for regular local rings in positive
characteristic \cite{HS93}, characteristic zero \cite{Lyu93}, and
unramified\footnote{Very recently, the second author has shown that the
statement fails for ramified regular local rings \cite{Ma26}.} mixed
characteristic \cite{Lyu00}, and also for smooth $\mathbb Z$-algebras
\cite{BBLSZ14}. The proofs in these regular settings are ``topological'':
they rely crucially on a $\mathcal D$-module or $F$-module structure.

Outside the regular case, examples of Singh, Katzman, and Singh--Swanson showed
that the set of associated primes may be infinite \cite{Sin00,Kat02,SS04}.
These examples still left open the following extremely natural question,
usually attributed to Huneke--Lyubeznik:

\begin{question}\label{q:huneke-lyubeznik}
Fix a noetherian ring $R$, ideal $I \subset R$, and integer $i$. Is $\operatorname{Supp} H^i_I(R)$ Zariski closed? Equivalently, since
$\operatorname{Supp} H^i_I(R)$ is closed under specialization, does
$\operatorname{Supp} H^i_I(R)$ have finitely many minimal primes?
\end{question}

Question~\ref{q:huneke-lyubeznik} was studied in depth in \cite{HKM09}; in
particular, they gave an affirmative answer when $\dim(R)\leq 4$ and $R$ is
local, or when $I$ has cohomological dimension at most $2$. Other positive
cases, mostly in characteristic $p$, include hypersurface and finite
$F$-representation type settings \cite{HNB17,KZ17,DaoQuy,QG24}, and more
recently certain complete intersections \cite{GZ25}. It is tempting to
speculate that a sufficiently robust $\mathcal D$-module/$F$-module theory for
singular varieties might extend the argument in the regular case to the
general case. This note shows that the simplest form of this expectation
fails as Question~\ref{q:huneke-lyubeznik} has a negative answer, even for normal domains:

\begin{theorem}[Theorems~\ref{thm:normal-five-quotient}
and~\ref{thm:normal-five-blowups-charp}, and
Corollary~\ref{cor:normal-local-six}]
\phantomsection
\label{main-theorem}
\begin{enumerate}
\item Let $k=\mathbb Z$ or let $k$ be a field. There is a finitely generated
flat normal $k$-domain $R$ of
dimension $5$ and a codimension $1$ prime ideal $I\subset R$ such that
$H^2_I(R)$ has infinitely many minimal primes.
\item Let $k$ be a field. There is a normal local domain $R$ of dimension
$6$, essentially of finite type over $k$, and a codimension $1$ prime ideal
$I\subset R$ such that $H^2_I(R)$ has infinitely many minimal primes.
\end{enumerate}
\end{theorem}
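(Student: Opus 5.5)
The plan is to reduce the support of $H^2_I(R)$ to a coherent cohomology computation on $U=\operatorname{Spec}R\setminus V(I)$, and then to arrange that this cohomology detects \emph{torsion points} on an elliptic curve. Torsion points form a countable, Zariski dense, non-closed subset.

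First the reduction. For $i\ge 2$ there is the standard identification $H^i_I(R)\cong H^{i-1}(U,\mathcal O_U)$, and it is compatible with localization. Hence a prime $\mathfrak p\supseteq I$ lies in $\operatorname{Supp}H^2_I(R)$ if and only if $H^1(U_{\mathfrak p},\mathcal O)\neq 0$, where $U_{\mathfrak p}=\operatorname{Spec}R_{\mathfrak p}\setminus V(IR_{\mathfrak p})$.

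The construction I would try is the following. Fix an elliptic curve $E$ over $k$; over $\mathbb Z$, use a family over a suitable open of a modular curve so that the result is flat. Take the base $B=E$ and the Poincaré-type line bundle $\mathcal L$ on $E\times B$ with $\mathcal L|_{E\times\{b\}}\cong\mathcal O_E(b-0)$. Let $Y\to E\times B$ be the $\mathbb G_m$-torsor of $\mathcal L$, or a variant twisted by an ample class. The goal is to realize $Y$, up to an extra affine factor, as $U=\operatorname{Spec}R\setminus V(I)$, with $R$ a normal affine domain and $V(I)$ a prime divisor. One way is to take $R$ a suitable section ring of an ample bundle on a projective completion, and $I$ the ideal of the boundary divisor added to compactify the torsor direction. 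The extra dimensions are spent making $R$ affine and normal and making $V(I)$ a single codimension-one prime; the count is to land at $\dim R=5$.

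The key computation is then
\[
H^1(U,\mathcal O)=\bigoplus_{n\in\mathbb Z}H^1(E\times B,\mathcal L^{n}\otimes\cdots).
\]
Fiberwise over $b\in B$, the $n$-th summand contributes $H^1(E,\mathcal O_E(n(b-0)))$. This is nonzero if and only if $n(b-0)\sim 0$, i.e. if and only if $b$ is torsion of order dividing $n$.

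Given this, I would read off the support. Using base change and the semicontinuity/cohomology-and-base-change theorems along $B$, localizing at the prime of $R$ lying over the vertex-curve above a point $b\in B$ gives a nonzero contribution exactly when $b$ is torsion. So $\operatorname{Supp}H^2_I(R)$ is a union of irreducible closed sets $Z_b$, one for each torsion point $b$, with no containments among them. Hence there are infinitely many minimal primes, one for each $b\in E[\infty]$ (or $E[p'^\infty]$ in characteristic $p$). Since $E[\infty]$ is infinite but not closed, the support is not closed.

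For part (2), I would take the projective version of the five-dimensional construction, form its affine cone, and localize at the vertex. This gives a normal local domain of dimension $6$, essentially of finite type. Graded local cohomology and the Segre/cone compatibility $H^2_{I}(\text{cone})_{\text{graded}}\leftrightarrow\bigoplus_m H^1(U,\mathcal O(m))$ transport the infinitely many minimal primes, since each $Z_b$ gives a homogeneous prime passing through the vertex.

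The main obstacle is the global construction. One has to produce $R$ normal, of finite type (flat over $\mathbb Z$ in part (1)), with $V(I)$ a \emph{prime} divisor, in such a way that the complement really has the torsor cohomology above. In particular, the other graded pieces, from $H^1$ and $H^0$ of $B$ and from the ample twists, must not contribute nonzero $H^1$ at nontorsion points; otherwise the support would become all of the relevant stratum. I would first try to kill these contributions by twisting with a sufficiently ample bundle pulled back from $B$, so that Serre vanishing removes everything except the degree-zero fiber classes. In positive characteristic, where this twisting may interact badly with Frobenius, I expect one must instead use an explicit blowup model, which the paper appears to handle separately.
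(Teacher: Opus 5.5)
\textbf{There is a genuine gap in the support computation.} Your framework is the paper's: a $\mathbb G_m$-torsor over a family carrying a degree-zero line bundle, a section ring of a compactification, and $H^2_J(T)\simeq\bigoplus_m H^1(U,\mathcal H^{\otimes m}|_U)$. Your family $\mathcal O_E(b-0)$ is also essentially the diagonal family in the proof of Proposition~\ref{prop:abelian-surface-input}. What goes wrong is that you overlook the one summand the whole construction is built to remove.

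By your own fibrewise formula, the $n=0$ summand is $H^1(E,\mathcal O_E)\neq0$ over \emph{every} $b$, including the generic point of the base. In the paper's notation, the cone-degree-zero part is $\bigoplus_{r\in\mathbb Z}H^1(A,\mathcal L^{\otimes r})$, and its $r=0$ term $H^1(A,\mathcal O_A)$ is a locally free $\mathcal O(S)$-module of positive rank. So the generic point of the base stratum lies in the support. The support then contains all your $Z_b$, and it is closed. No ample twist can repair this, because the term sits in cone degree $m=0$, where nothing is twisted.

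The missing idea is a mechanism that kills exactly this term. The paper gives two.
\begin{enumerate}
\item In Theorem~\ref{thm:normal-five-quotient} ($2$ invertible), you pass to the quotient by the inversion $\tau$ of the extension $1\to\mathbb G_m\to W\to A\to0$. Since $\tau$ acts by $-1$ on $H^1(A,\mathcal O_A)$ and swaps the $\pm r$ summands, the invariants are $\bigoplus_{r>0}H^1(A,\mathcal L^{\otimes r})$.
\item In Theorem~\ref{thm:normal-five-blowups-charp}, you keep only $r\geq0$, blow up an ample divisor $C$ in the zero section, and contract the strict transform $D\simeq A$ to a section $S\to X$. Lemmas~\ref{lem:abelian-contraction} and~\ref{lem:blowup-contraction-cohomology} then identify $H^1(U,\mathcal O_U)$ with the kernel of restriction to $H^1(A,\mathcal O_A)$.
\end{enumerate}
Characteristic $2$ needs the second route because $\mathbb Z/2$-invariants are not exact there; Frobenius plays no role.

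\textbf{Two further features of your setup also break it.}

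First, the fibre cannot be an elliptic curve. The polarization must be positive on the complete curves $E\times\{b\}$ lying in the boundary sections. So the negative cone degrees $m<0$ contribute $H^1$ of line bundles of negative degree on every fibre $E_b$. By Riemann--Roch these groups are nonzero at every $b$, which again puts the generic point in the support. The paper uses an abelian \emph{surface} $A/S$ (a self-product) precisely so that $H^1$ vanishes for both ample and anti-ample twists, by Mumford's vanishing theorem and Serre duality. That is where $\dim T=1+2+1+1=5$ comes from, not from normality or affineness.

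Second, the base must be an affine Dedekind scheme $S$, so that $T_0=\mathcal O(S)$. Then the surviving module is an $\mathcal O(S)$-module, and its minimal primes are the closed points $s$ with $\mathcal L_s$ torsion. This follows from $H^1(A,\mathcal L^{\otimes r})[\pi_s]\simeq H^0(A_s,\mathcal O_{A_s})$, using $H^0(A,\mathcal L^{\otimes r})=0$. With $B=E$ projective you get $R_0=k$, and there is no such stratum of primes.

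\textbf{Smaller points.} Over $\mathbb Z$ no modular curve is needed: $S=\operatorname{Spec}\mathbb Z[1/N]$ with a rational point of infinite order suffices, since every closed fibre lies over a finite field. Your outline of part (2) is fine once part (1) holds, and it matches Lemma~\ref{lem:homogenization}.
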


The infiniteness in (1) comes, ultimately, from combining two geometric phenomena: there exist one-parameter families of degree-zero line bundles on
abelian varieties whose generic member is nontorsion but which becomes torsion at infinitely many specializations; and, for a degree-zero line bundle $L$ on an abelian variety $A$, one has $H^1(A,L)\neq0$ if and only if $L$ is trivial. In fact, using this idea, we give two constructions of such an example: a simpler one in Theorem~\ref{thm:normal-five-quotient} that works when $k$ is not a field of characteristic $2$, and a slightly more involved one in Theorem~\ref{thm:normal-five-blowups-charp} that works uniformly for all $k$ of positive characteristic. Part (2) is obtained from part (1) by passage to affine cones. Separately, we also give a completely explicit arithmetic example, at the expense of allowing nilpotents:

\begin{theorem}[Proposition~\ref{prop:explicit-p-torsion}]
\label{thm:intro-p-torsion}
Consider the ring 
\[ R :=\frac{\mathbb Z[x_0,x_1,x_2,u_0,u_1,u_2]}
{(x_0u_1-x_1u_0,\ x_0u_2-x_2u_0,\ x_1u_2-x_2u_1)^2}.\]
This $5$-dimensional non-reduced ring admits a codimension $1$ prime ideal $J\subset R$ such that
$H^2_J(R)$ has infinitely many minimal primes.
\end{theorem}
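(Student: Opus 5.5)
The approach is to reduce to the reduced ring and a conormal module, and then to find integer torsion coming from a boundary map that behaves like a derivation. Put $I=(x_0u_1-x_1u_0,\ x_0u_2-x_2u_0,\ x_1u_2-x_2u_1)\subset P:=\mathbb Z[x_0,x_1,x_2,u_0,u_1,u_2]$ and $S:=P/I$. Then $S$ is the ring of $2\times 3$ matrices of rank $\le 1$ over $\mathbb Z$ (the cone over the Segre $\mathbb P^1\times\mathbb P^2$), and it is $5$-dimensional. We have $R=P/I^2$, which sits in the square-zero extension
\[0\to I/I^2\to R\to S\to 0.\]
Take $J=(x_0,x_1,x_2)R$; its radical contains $I/I^2$. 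The locus $V(J)\subset \operatorname{Spec} S$ is $\operatorname{Spec}\mathbb Z[u_0,u_1,u_2]$, so $J$ is prime of codimension $1$.

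For $i\ge 2$, use $H^i_J(M)\cong H^{i-1}(U,\widetilde M)$, where $U=\operatorname{Spec}R\setminus V(J)$ is covered by the $D(x_j)$. On $U$ the reduced scheme is $U_{\mathrm{red}}\cong(\mathbb A^3_{\mathbb Z}\setminus 0)\times\mathbb A^1_t$, via $u=tx$. The conormal sheaf $\mathcal C=I/I^2|_U$ is a rank-$2$ bundle pulled back from the twisted cotangent bundle of $\mathbb P^2_{\mathbb Z}$, namely essentially $\Omega_{\mathbb P^2}(1)$ pulled back along $\mathbb A^3\setminus 0\to\mathbb P^2$. Taking cohomology of the extension on $U$ gives
\[H^0(U,\mathcal O_R)\to H^0(U_{\mathrm{red}},\mathcal O)\xrightarrow{\ \delta\ } H^1(U,\mathcal C)\to H^1(U,\mathcal O_R)\to H^1(U_{\mathrm{red}},\mathcal O).\]
The last term vanishes, because $H^1$ of $\mathcal O$ on $\mathbb A^3\setminus 0$ is zero. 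Hence $H^2_J(R)\cong\operatorname{coker}\delta$.

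Next I compute both sides using the grading. We have $H^0(U_{\mathrm{red}},\mathcal O)=\mathbb Z[x_0,x_1,x_2][t]$. Also $H^1(U,\mathcal C)=\bigoplus_n H^1(\mathbb P^2,\Omega(n))\otimes\mathbb Z[t]$, and by Bott vanishing over $\mathbb Z$ only the twist giving $H^1(\mathbb P^2,\Omega)\cong\mathbb Z$ survives. So $H^1(U,\mathcal C)\cong\bigoplus_m\mathbb Z\cdot\eta\, t^m$ is a free module on an Euler/Chern class, suitably twisted.

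The map $\delta$ measures the obstruction to lifting functions from $U_{\mathrm{red}}$ to the first-order thickening. Since $\delta$ is a derivation composed with the extension class, I expect $\delta(t^{m})=m\cdot t^{m-1}\eta$, up to a unit and a shift. The point is that $d(t^m)=mt^{m-1}dt$, and the extension class pairs $dt$ with the generator of $H^1(\mathbb P^2,\Omega)$, which is $c_1(\mathcal O(1))$. Granting this, $H^2_J(R)\cong\bigoplus_{m\ge1}\mathbb Z/m\cdot t^{m-1}\eta$ (up to indexing), with the module structure given by the $t$- and $u$-actions.

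It remains to compute the support. The $p$-torsion part of the cokernel is nonzero for every prime $p$, and on it multiplication by $t$ (or by $u_j$, which acts through $t$) is nilpotent in a controlled way. So the support should be $\bigcup_p V(p,J,\dots)$, possibly together with extra components, with each $V(p,J)$-type component minimal. These are infinitely many minimal primes of $H^2_J(R)$, distinct because their residue characteristics differ.

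The main obstacle is the exact identification of $\delta$, namely pinning down that it is multiplication by $m$ on the degree-$m$ piece. I would first compute it Čech-theoretically on the cover $D(x_0),D(x_1),D(x_2)$. On $D(x_0)$, $t$ lifts as $u_0/x_0$. The discrepancy of the lifts $(u_0/x_0)^m$ and $(u_1/x_1)^m$ on $D(x_0x_1)$ is $m(u_0/x_0)^{m-1}\cdot(x_1u_0-x_0u_1)/(x_0x_1)$ modulo $I^2$. This exhibits $\delta(t^m)$ as $m$ times the Čech cocycle of the minor, which represents the generator $\eta$. I would then verify that this cocycle generates $H^1(U,\mathcal C)$ in each degree, using the Koszul/Euler-sequence description of $H^1(\mathbb P^2,\Omega)$ over $\mathbb Z$. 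Only after that would I determine the annihilators in order to identify the minimal primes precisely.
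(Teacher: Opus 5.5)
Your proposal is correct and is essentially the paper's proof. The paper splits the same square-zero extension $0\to I/I^2\to R\to R_{\mathrm{red}}\to 0$ by $u$-degree into $0\to N_r\to A_r\to B_r\to 0$ and identifies $\widetilde{N_r}$ with $\Omega^1_{\mathbb P^2_{\mathbb Z}}$. It then shows the connecting map is multiplication by $r$ using exactly your expansion $q_j^r-q_i^r=r\,q_i^{r-1}(q_j-q_i)$ modulo $I^2$, with the Euler ($d\log$) cocycle as generator.

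The one soft spot is your final paragraph. There, ``possibly together with extra components'' is precisely what must be excluded, since a characteristic-zero component would destroy minimality. Close it as follows: $\operatorname{coker}\delta\simeq\bigoplus_{m\geq1}\mathbb Z/m\mathbb Z$ is torsion, so $H^2_J(R)\otimes\mathbb Q=0$ and every support prime lies over a single rational prime. Your $p$-torsion for every $p$ then gives a distinct minimal prime over each $p$. In fact, $x_i$ and $u_i=x_it$ kill $H^1(U,\mathcal C)$, because only one twist of $\Omega_{\mathbb P^2}$ has nonzero $H^1$. So the support is exactly the closed points $(p,x_0,x_1,x_2,u_0,u_1,u_2)$, matching the paper. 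Also drop the nilpotence remark: $t$ does not lift to a function on $U$ (as $\delta(t)=\eta\neq0$), so it does not act on the cokernel.
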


In this example, the infiniteness arises from the fact that
$H^2_J(R)$ contains $p$-torsion for every prime number $p$, while it vanishes
rationally. In fact, the construction naturally gives
\[
H^2_J(R)\simeq H^1_{\text{dR}}(\mathbb{A}^1_{\mathbb{Z}}) \simeq  \bigoplus_{r\geq 1}\mathbb Z/r\mathbb Z
\]
as an abelian group.

\begin{remark}[A comment on dimension]
\label{rmk:dimintro}
The counterexamples in Theorem~\ref{main-theorem} (1) and 
Theorem~\ref{thm:intro-p-torsion} have dimension $5$. On the other hand, if $R$ is a noetherian ring of
dimension at most $3$, then $H^i_I(M)$ has only finitely many minimal primes
for every ideal $I\subset R$, every finitely generated $R$-module $M$, and
every $i$. The cases $i=0,1$ are elementary
\cite[Proposition~1.1(c)]{Mar01}, and the remaining cases follow from
\cite[Proposition~2.3 and Corollary~2.4]{Mar01} and
\cite[Corollary~3.3]{HKM09}. Thus dimension $4$ is the only unresolved case
without further hypotheses. Since finiteness is known for local rings of
dimension at most $4$ \cite{HKM09}, any $4$-dimensional counterexample to
Question~\ref{q:huneke-lyubeznik} would necessarily be nonlocal; we were unable to find one.

In the local case, Theorem~\ref{main-theorem} (2) gives a $6$-dimensional example, so the main remaining question is to find a $5$-dimensional local example.
\end{remark}

Besides the precise question in Remark~\ref{rmk:dimintro}, it would be interesting know if local cohomology modules of noetherian rings enjoy some yet-to-be-articulated finiteness property. 

\subsection*{Acknowledgments}
We are grateful to Johan de Jong, Mel Hochster, Craig Huneke, Anurag Singh,
and Ilya Smirnov for discussions. B.B. was partially supported by grants
from the Packard Foundation and the Simons Foundation (MPS-SIM-00622511,
MPS-PERF-00001529-02). L.M. was partially supported by NSF grants DMS-2302430
and DMS-2424441.

\subsection*{AI disclosure}
The examples in this paper arose through interactions with ChatGPT (5.5 and 5.6). The LLM's first ``solution'' of Question~\ref{q:huneke-lyubeznik} was flawed, but had the promising idea of using torsion points on elliptic curves; when encouraged to pursue that idea further, the model produced a $6$-dimensional counterexample. The authors modified and simplified this construction to obtain the $5$-dimensional normal example in Theorem~\ref{thm:normal-five-quotient}. In a separate interaction, the model found the example in Theorem~\ref{thm:intro-p-torsion} when asked for a local cohomology module that vanishes rationally but has $p$-torsion for infinitely many primes $p$. The authors wrote the paper (with assistance from Codex) and are responsible for its correctness.

\subsection*{Conventions}
If $X$ is a scheme and $\mathcal E$ is a vector bundle on $X$, we use the
Grothendieck conventions
\[
\mathbb P_X(\mathcal E)
:=\operatorname{Proj}_X\!\left(\operatorname{Sym}_{\mathcal O_X}\mathcal E\right),
\qquad
\mathbb V_X(\mathcal E)
:=\operatorname{Spec}_X\!\left(\operatorname{Sym}_{\mathcal O_X}\mathcal E^\vee\right).
\]
Thus $\mathbb P_X(\mathcal E)$ parametrizes invertible quotients of
$\mathcal E$, while $\mathbb V_X(\mathcal E)$ is the total space of
$\mathcal E$. For a line bundle $\mathcal{L}$ on $X$, writing $\pi:\mathbb{P}_X(\mathcal{O}_X \oplus \mathcal{L}^{-1}) \to X$ for the projection, the open subset of $\mathbb{P}_X(\mathcal{O}_X \oplus \mathcal{L}^{-1})$ where the second component of the tautological quotient map $\pi^*(\mathcal{O}_X \oplus \mathcal{L}^{-1}) \to \mathcal{O}_\pi(1)$ induces an isomorphism $\pi^* \mathcal{L}^{-1} \simeq \mathcal{O}_{\pi}(1)$ is identified $\mathbb{V}_X(\mathcal{L}^{-1})$; this embedding will be used in the sequel.

\section{A normal \texorpdfstring{$5$}{5}-dimensional example via a quotient}

In this section, we record an example proving Theorem~\ref{main-theorem} (1) away from characteristic $2$. It relies on the following:

\begin{proposition}[Infinite order points with many torsion specializations]
\label{prop:abelian-surface-input}
Let $k=\mathbb{Z}$ or let $k$ be a field. There is a finite type affine 
$k$-scheme $S$ with $\mathcal{O}(S)$ a PID, an elliptic curve
$E\to S$, and $P\in E(S)$ such that:
\begin{enumerate}
\item $P_\eta$ has infinite order in $E_\eta(\kappa(\eta))$, where $\eta \in S$ is the generic point.
\item there are infinitely many closed points $s\in S$ such that
$P_s$ has finite order in $E_s(\kappa(s))$.
\end{enumerate}
In fact, when $k=\mathbb{Z}$, we may take
$S=\operatorname{Spec}(\mathbb{Z}[1/N])$ for $N\geq 1$ sufficiently divisible;
when $k$ is a field, we may take $S$ to be a nonempty open subset of
$\mathbb{A}^1_k$.
\end{proposition}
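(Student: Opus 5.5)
Three cases need separate treatment: $k=\mathbb Z$, $k$ a field of characteristic $0$, and $k$ a field of characteristic $p>0$. The common principle is that the group structure on an elliptic curve over a field with small "arithmetic" (a finite field, or the reduction of a number field) forces every rational point to be torsion. So it suffices to produce a non-torsion point $P_\eta$ over the generic point whose specializations land in such fields infinitely often.

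\emph{Case $k=\mathbb Z$.} Pick an elliptic curve over $\mathbb Q$ with a point of infinite order. A concrete choice is $y^2+y=x^3-x$ with $P=(0,0)$; this curve has rank $1$, and $P$ has infinite order by Nagell--Lutz or by its nonzero canonical height. Take $N$ divisible by the discriminant, so the Weierstrass model is an elliptic curve over $\mathbb Z[1/N]$. The point $P$ has integral coordinates, so it extends to a section over $S=\operatorname{Spec}\mathbb Z[1/N]$; in general one would also invert the denominators of the coordinates. The ring $\mathbb Z[1/N]$ is a PID. Every closed point $s$ has finite residue field $\mathbb F_\ell$, so $E_s(\mathbb F_\ell)$ is finite and $P_s$ is torsion. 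There are infinitely many such $s$, which gives (1) and (2).

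\emph{Case $k$ a field of characteristic $p>0$.} Work over $\mathbb F_p(t)$ with an elliptic curve $E$ having a section of infinite order. For example, take a non-isotrivial Legendre-type family with an explicit point, such as $y^2=x(x-1)(x-t)$ perturbed so that a rational point such as $(t,\ast)$ exists. Infinite order can be checked via the canonical height, or by specializing to a place of good reduction and showing the point is not torsion there. The cleanest choice is a curve of the form $y^2=x^3+ax+b$ with $a,b\in\mathbb F_p[t]$ and a non-torsion point of small height. One can certify that no multiple $nP$ is $O$ by a degree argument: the degrees of the $x$-coordinates of $nP$ grow like $n^2\hat h(P)$. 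Then shrink to a nonempty open $S\subset\mathbb A^1_{\mathbb F_p}$ where $E$ has good reduction and $P$ is integral. Its coordinate ring is a localization of a polynomial ring in one variable, hence a PID. Finally base change to $k$. Closed points of $S_k$ lying over closed points of $\mathbb A^1_{\mathbb F_p}$ with $\kappa(s)$ algebraic over $\mathbb F_p$ have finite residue fields, at least over $\overline{\mathbb F}_p$-points, so $P_s$ is torsion; there are infinitely many such points. However, if $k$ is, say, $\mathbb F_p(u)$, not every closed point of $\mathbb A^1_k$ has finite residue field. The fix is to use only those closed points with $t$ algebraic over $\mathbb F_p$: each such $t$ defines a closed point of $\mathbb A^1_k$ whose residue field is a finite extension of $\mathbb F_p\cdot k$. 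The image of $P$ there comes from the finite-field fiber, so it is torsion.

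Non-torsion of $P_\eta$ survives base change to $k$, because $E$ is defined over $\mathbb F_p(t)$ and $\mathbb F_p(t)\subset k(t)$. Torsion is detected by $nP=O$, which is a statement invariant under field extension.

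\emph{Case $k$ a field of characteristic $0$.} Similarly, take a non-constant family over $\mathbb Q(t)$ with a non-torsion section, for instance the Legendre-type family above, and base change to $k$. For infinitely many torsion specializations, consider the closed points $t_0\in\overline{\mathbb Q}$ where $P_{t_0}$ is torsion. This is the main obstacle: we need infinitely many such algebraic $t_0$. Since the family is non-isotrivial and the section is non-torsion, for each $n$ the locus $\{t:\ nP_t=O\}$ is a finite set cut out by the division polynomial $\psi_n(x(P),t)=0$. This set is nonempty for infinitely many $n$, because $\psi_n(x(P_t))$ is a nonconstant polynomial in $t$ for large $n$, by the same height-growth argument. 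Moreover, a given $t_0$ has a single exact order, so the points for distinct exact orders are distinct; this is where one must check that new primitive orders really occur. I would first try to verify this directly through the degree growth of $\psi_n(x(P_t))$ in $t$, which is roughly $n^2\hat h(P)$, exceeding the contributions from smaller orders. Failing that, I would cite the Silverman/Masser--Zannier-type fact that a non-torsion section of a non-isotrivial family becomes torsion at infinitely many (but sparse) algebraic parameters. These algebraic $t_0$ give closed points of $S_k$, and the specialization of $P$ there is torsion.
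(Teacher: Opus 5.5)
Your $k=\mathbb Z$ case is the paper's argument. Your characteristic-$p$ trick is also valid for (2): descend to $\mathbb F_p$, where every closed fibre lies over a finite field, then base change to $k$. For fields, however, your route differs from the paper's, and as written two of its inputs are asserted rather than supplied.

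First, you never produce an elliptic curve over $\mathbb F_p(t)$ or $\mathbb Q(t)$ with a section of infinite order. None of your candidates works as stated:
\begin{itemize}
\item The Legendre family $y^2=x(x-1)(x-t)$ has only torsion sections. Its singular fibres are $I_2,I_2,I_2^*$, so Shioda--Tate gives rank $10-2-8=0$, and ``perturbed'' has to do all the work.
\item A model $y^2=x^3+ax+b$ is singular in characteristic $2$.
\item Your fallback test ``specialize to a good place and show the point is not torsion there'' cannot work in characteristic $p$, because every closed point of $\mathbb A^1_{\mathbb F_p}$ has finite residue field.
\end{itemize}
Such curves exist, but you must construct or cite one.

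Second, in characteristic $0$ you outsource the infinitude of torsion parameters, and the degree-growth heuristic does not give it on its own. The roots of $\psi_n(x(P_t))$ could a priori pile up at finitely many $t_0$ with growing multiplicity. The missing observation is that in characteristic $0$, multiplication by a nonzero integer is an automorphism of the formal group. So if $P_{t_0}$ has exact order $m$, the local intersection multiplicity of $nP$ with the zero section at $t_0$ is the same for every multiple $n$ of $m$. Shioda's height formula says $(nP\cdot O)$ equals $\tfrac{n^2}{2}\langle P,P\rangle$ up to a bounded error, so it is unbounded. Hence there must be infinitely many torsion parameters. With these two points filled in, or properly cited, your argument goes through, though it rests on substantially heavier input than needed.

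The paper avoids both issues with one construction that works in every characteristic, including $2$ and $3$:
\begin{itemize}
\item Fix $E_0/k$ and put $T=E_0-E_0[2]$.
\item Let $S=T/[-1]$, an affine open of $E_0/[-1]\simeq\mathbb P^1_k$ missing the image of $0$.
\item Let $E=(E_0\times T)/[-1]$, the quadratic twist of the constant curve along the \'etale double cover $T\to S$.
\item Let $P$ be the descent of the diagonal $\Delta_T$.
\end{itemize}
After pulling back to $T$, (1) holds because the identity map $T\to E_0$ does not land in any $E_0[N]$. Property (2) holds because the infinitely many torsion points of $E_0(\overline k)$ give closed points of $T$, hence of $S$, at which $P$ specializes to a torsion point. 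For $\operatorname{char}(k)\neq 2$ and $E_0\colon y^2=f(x)$, this is the twist $f(t)y^2=f(x)$ with the point $(t,1)$. That is exactly the positive-rank curve your argument was missing, and its torsion specializations come for free, with no heights, division polynomials, or Masser--Zannier-type input.
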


In our eventual applications, it suffices to know that $\mathcal{O}(S)$ is a Dedekind domain. 

\begin{proof}
For $k=\mathbb{Z}$, take any elliptic curve $E_{\mathbb{Q}}/\mathbb{Q}$ with a
$\mathbb{Q}$-rational point $P$ of infinite order, for instance the curve
$37a1$ in \cite{Cremona}. After increasing $N$, the curve and $P$ spread out
over $S=\operatorname{Spec}(\mathbb{Z}[1/N])$. Then $\mathcal{O}(S)$ is a PID,
$P_\eta$ has infinite order, and every $P_s$ has finite order because each
closed fibre $E_s$ is over a finite field, so $E_s(\kappa(s))$ is a finite group. 

Now let $k$ be a field. We use a standard construction involving the identity map and quadratic twists; see \cite[Sections~8.1--8.3]{MoretBailly05} (where it
is stated in characteristic different from $2$). Choose an elliptic curve
$E_0$ over $k$. Let $\iota=[-1]$ and put $T=E_0-E_0[2]$, so the involution $\iota$ acts freely on
$T$. Then $T \to S := T/\iota$ is a finite \'etale degree $2$ map. Moreover, $S$
is a nonempty affine open subset of $E_0/\iota\simeq \mathbb{P}^1_k$ that
misses the image of $0$, so $\mathcal{O}(S)$ is a localization of $k[t]$ and
hence a PID. Projection to the second component descends to a map
\[
E:=(E_0\times T)/\iota \longrightarrow S,
\]
which is an elliptic curve over $S$ (with zero section descending from
$0\times T$). The diagonal map $\Delta_T:T \to E_0 \times T$ is $\iota$-equivariant, so 
it descends to a section $P:S\to E$. We claim that $(E \to S,P)$ does the job. As both claims can be checked after pulling back along finite covers, it suffices to show the analog of (1) and (2) for $(E \to S, P)$ replaced by $(E_0 \times T, \Delta_T)$. If (1) failed, then $\Delta_T$ would have image in $E_0[N] \times T$ for some integer $N \geq 1$, which is clearly false. And (2) clearly holds: simply run through the torsion points of $T = E_0 - E_0[2]$.
\end{proof}

The following standard lemma collects the facts about finite quotients that
we will use.

\begin{lemma}[Quotients of projective schemes]
\label{lem:finite-group-quotient}
Let $S=\operatorname{Spec}(R)$ be noetherian, let $G$ be a finite group of
order $N$ with $N\in R^\times$, and let $Y$ be a flat projective normal
integral $S$-scheme equipped with an $S$-action of $G$. Then the quotient $X := Y/G$ in schemes exists, the quotient map $q:Y \to X$ is finite, and $X$ is a flat projective normal integral $S$-scheme. More precisely, if $\mathcal A$ is an $S$-ample line bundle on $Y$, then some power of the naturally $G$-linearized line bundle $\bigotimes_{g\in G}g^*\mathcal A$ descends to an $S$-ample line bundle $\mathcal H$ on $X$. The corresponding $\mathbb{N}$-graded section ring $\bigoplus_{n\geq0}H^0(X,\mathcal H^{\otimes n}) = \left(\bigoplus_{n\geq0}H^0(Y,q^*\mathcal H^{\otimes n})\right)^G$ is a finitely generated normal domain over $R$ that,  after replacing $\mathcal H$ by a power, is generated in degree $1$.
\end{lemma}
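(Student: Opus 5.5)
The plan is to build everything from the graded section ring of $\mathcal H$, using invariant theory of the finite group $G$ over $R$ in which $N$ is invertible. First, fix an $S$-ample $\mathcal A$ on $Y$ and set $\mathcal M:=\bigotimes_{g\in G}g^*\mathcal A$. It is $S$-ample, being a tensor product of ample bundles, and it carries a canonical $G$-linearization given by permuting the factors. Replacing $\mathcal A$ by a power, we may assume $\mathcal M$ is very ample and that the section ring $B:=\bigoplus_{n\ge0}H^0(Y,\mathcal M^{\otimes n})$ is a finitely generated graded $R$-algebra with $Y=\operatorname{Proj}B$. Since $Y$ is normal and integral, $B$ is a domain. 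We pass to a Veronese subring so that $B$ agrees in positive degrees with its normalization, or simply work with the integrally closed section ring in high degrees. $G$ acts on $B$ by graded $R$-algebra automorphisms.

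Second, we form $X:=\operatorname{Proj}(B^G)$. Since $|G|=N\in R^\times$, the Reynolds operator $\frac1N\sum_g g$ is an $R$-linear $B^G$-module retraction $B\to B^G$. Together with the fact that $B$ is integral over $B^G$ (each $b$ is a root of $\prod_g(t-gb)$), the standard Noether argument shows $B^G$ is a finitely generated $R$-algebra and $B$ is a finite $B^G$-module. The retraction is compatible with localization at homogeneous invariants $f$: $(B_f)^G=(B^G)_f$, and the sets $D_+(f)$ for invariant $f$ of positive degree cover $Y$, since for any point $y$ the product $\prod_g g^*s$ of a section $s$ nonvanishing on the orbit is invariant. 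It follows that $q:Y\to X$ is affine and finite, and it is a categorical quotient in schemes: locally it is $\operatorname{Spec}A\to\operatorname{Spec}A^G$, which is the standard affine quotient. Normality and integrality of $X$ come from the fact that $A^G=A\cap\operatorname{Frac}(A)^G$ is an intersection of a normal domain with a subfield. Flatness over $R$ holds because $A^G$ is a direct summand of the $R$-flat module $A$, and direct summands of flat modules are flat.

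Third, we descend the line bundle and compare section rings. Choose $d$ so that the invariant ring $(B^G)^{(d)}$ is generated in degree $1$; this is possible for a Veronese of any finitely generated positively graded algebra. Then $\mathcal H:=\mathcal O_X(d)$ is an $S$-ample line bundle, and $q^*\mathcal H\simeq\mathcal M^{\otimes d}$ compatibly with the linearization. To see this, we check on the cover $D_+(f)$ with $\deg f = d$, which is possible after enlarging $d$ so that the stabilizers act trivially on fibres. The stabilizer condition holds after raising to the power $N$, because stabilizer characters have order dividing $N$. For the identity of section rings, we use $H^0(X,\mathcal H^n)=H^0(Y,q^*\mathcal H^n)^G$, which follows from $q_*\mathcal O_Y^G=\mathcal O_X$ and the projection formula. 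After replacing $\mathcal H$ by a power, this ring agrees with $(B^G)^{(d)}$, which is generated in degree $1$. Normality of the section ring follows as in the second step: it is the invariants of the normal section ring of $Y$, since section rings of ample bundles on normal integral projective schemes are normal domains.

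The main obstacle I expect is the descent of $\mathcal M^{\otimes m}$ to a genuine line bundle rather than merely a rank-one reflexive sheaf. This requires that the isotropy groups act trivially on the fibres at fixed points. I plan to handle it by taking $m$ divisible by $N$ and arguing locally with the invariant generator $\prod_g g^*s$.
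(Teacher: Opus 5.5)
Your proposal is correct, but it takes a different route from the paper. The paper's proof is entirely by citation:
\begin{itemize}
\item existence of $Y/G$ comes from Rydh's general results on quotients by finite groups;
\item descent of a power of $\bigotimes_g g^*\mathcal A$ comes from Alper's criterion (stabilizers act trivially on fibres), which applies because $N\in R^\times$ makes $Y\to Y/G$ a good moduli space;
\item ampleness of $\mathcal H$ is deduced from ampleness of $q^*\mathcal H$ along the finite surjection $q$ (the cited Stacks Project tag);
\item the section-ring assertions are declared standard.
\end{itemize}

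You instead build $X$ directly as $\operatorname{Proj}(B^G)$, where $B$ is the full section ring of $\mathcal M=\bigotimes_g g^*\mathcal A$:
\begin{itemize}
\item Noether's argument gives finite generation of $B^G$ and finiteness of $B$ over it.
\item Graded prime avoidance, together with the norm $\prod_g g\cdot s$, shows that invariant principal opens $D_+(f)$ cover $Y$. Hence $q$ is finite and is locally the affine quotient $\operatorname{Spec}A\to\operatorname{Spec}A^G$.
\item The Reynolds operator gives $R$-flatness.
\item Choosing $d$ with $(B^G)^{(d)}$ generated in degree one makes $\mathcal H=\mathcal O_X(d)$ invertible, with $q^*\mathcal H\simeq\mathcal M^{\otimes d}$ and $\bigoplus_n H^0(X,\mathcal H^{\otimes n})=(B^{(d)})^G$ exactly.
\end{itemize}

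What your route buys is a self-contained argument that delivers projectivity, ampleness, descent and generation in degree one at once, using the projective structure of $Y$ from the start. The paper's route is shorter, relies on general existence and descent theorems, and its descent criterion says precisely which linearized bundles descend.

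One small correction to your write-up: in your approach the stabilizer condition is an output, not an input. Once $(B^G)^{(d)}$ is generated by $B^G_d$, every point of $X$ has some $f\in B^G_d$ not vanishing there. So no further enlargement of $d$ is needed in your third step, and your closing ``main obstacle'' paragraph is superfluous. Its remark about stabilizer characters would also need care at non-closed points, where the stabilizer can act nontrivially on the residue field.
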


\begin{proof}
This material is standard, so we only give references. The existence of the quotient follows from \cite[Theorem~4.4, Proposition~4.7, and Remark~4.9]{Rydh13}. The desired descended bundle $\mathcal{H}$ is provided by applying the criterion in \cite[Theorem~10.3]{Alper13}, and its ampleness follows from \cite[\href{https://stacks.math.columbia.edu/tag/0B5V}{Tag~0B5V}]{stacks-project}. The stated properties of the section ring are standard.
\end{proof}

We shall also repeatedly use the standard translation between cohomology on a quasi-projective variety and local cohomology of associated homogeneous co-ordinate rings:

\begin{lemma}[Local cohomology of graded rings via geometry]
\label{lem:cone-cohomology}
Fix a base ring $k$. Let $X/k$ be a projective $k$-scheme equipped with an ample line bundle $\mathcal{O}_X(1)$ and a closed subscheme $Z \subset X$, and put $U:=X-Z$. Let $R = \oplus_{n \geq 0} H^0(X,\mathcal{O}_X(n))$ be the homogeneous co-ordinate ring of $X$, and let $J \subset R$ be the homogeneous ideal of sections vanishing on $Z$. Then there is a fibre sequence
\[ R\Gamma_J(R) \to R \to \bigoplus_{n \in \mathbb{Z}} R\Gamma(U, \mathcal{O}_U(1)^{\otimes n}) \]
of graded objects in $D(R)$. In particular, for $i \geq 2$, we have an isomorphism 
\[ H^i_J(R) \simeq \bigoplus_{n \in \mathbb{Z}} H^{i-1}(U, \mathcal{O}_U(1)^{\otimes n})\]
of graded $R$-modules.
\end{lemma}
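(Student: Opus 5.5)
We relate $R\Gamma_J(R)$ to cohomology on the punctured spectrum and then pass from the punctured cone to $U$. Let $C = \operatorname{Spec}(R)$, let $V(R_+)\subset C$ be the vertex locus, and let $V(J)\subset C$ be the cone over $Z$. Since the ring $R$ is noetherian and finitely generated, the standard triangle gives
\[ R\Gamma_J(R) \to R\Gamma(C,\mathcal{O}_C) = R \to R\Gamma(C - V(J), \mathcal{O}_C), \]
where local cohomology with supports in $V(J)$ is cohomology of the open complement. We may use the Čech complex on homogeneous generators of $J$ to see that everything is graded. So the claim reduces to a graded identification
\[ R\Gamma(C-V(J),\mathcal{O}) \simeq \bigoplus_{n\in\mathbb{Z}} R\Gamma(U,\mathcal{O}_U(n)). \]

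For this identification, observe that $J \subset R_+$ up to radical. This holds when $Z$ is nonempty, and in the degenerate case we may replace $J$ by $J\cap R_+$, which has the same radical away from the vertex. Hence $C - V(J)$ is contained in the punctured cone $C^\circ = C - V(R_+)$.

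The punctured cone carries the $\mathbb{G}_m$-torsor $\pi: C^\circ \to X$, namely the complement of the zero section in $\mathbb{V}_X(\mathcal{O}_X(1))$, or dually in $\mathbb{V}_X(\mathcal{O}_X(-1))$, depending on convention. It satisfies $\pi_*\mathcal{O}_{C^\circ} = \bigoplus_{n\in\mathbb{Z}} \mathcal{O}_X(n)$. To justify this, cover $X$ by the opens $D_+(f)$ for $f\in R_1$; after replacing $\mathcal{O}(1)$ by a power we may assume $R$ is generated in degree $1$. On such an open, $C^\circ \cap D(f) = \operatorname{Spec} R_f$ and $R_f = \bigoplus_n (R_f)_n$ with $(R_f)_n = \Gamma(D_+(f),\mathcal{O}(n))$.

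Under $\pi$, the open $C - V(J)$ is exactly $\pi^{-1}(U)$: a homogeneous prime not containing $R_+$ fails to contain $J$ iff the corresponding point of $X$ lies outside $Z$. Because $\pi$ is affine, we get
\[ R\Gamma(\pi^{-1}U,\mathcal{O}) = R\Gamma(U, \pi_*\mathcal{O}) = \bigoplus_n R\Gamma(U,\mathcal{O}_U(n)), \]
where the direct sum commutes with cohomology on the quasi-compact, quasi-separated scheme $U$. Gradings match by construction.

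It remains to check that $R\to \bigoplus_n H^0(U,\mathcal{O}(n))$ is the natural map. The long exact sequence then gives $H^i_J(R)\simeq H^{i-1}(C-V(J),\mathcal{O})$ for $i\geq 2$, which is the stated formula.

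I expect the main obstacle to be the bookkeeping around the precise hypotheses: $R$ need not be generated in degree $1$, and $R$ is defined as the full section ring rather than some coordinate ring. Choosing $d$ with $R^{(d)}$ generated in degree $1$ fixes the geometry on the Veronese. For the full ring, I would instead use the cover of $C^\circ$ by the opens $D(f)$ for homogeneous $f\in R_+$, with $D_+(f)\subset X$ affine. The identity $(R_f)_n=\Gamma(D_+(f),\mathcal{O}(n))$ holds for the section ring of an ample bundle, since $D_+(f)=X_f$ and sections on $X_f$ are $\varinjlim$ of $H^0(X,\mathcal{O}(n+m\deg f))$ via multiplication by $f$. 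This gives the needed gluing without any generation assumption.
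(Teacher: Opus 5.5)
Your route is the paper's route. The cofibre of $R\Gamma_J(R)\to R$ is $R\Gamma(\operatorname{Spec}(R)-V(J),\mathcal{O})$; this open is identified with the $\mathbf{G}_m$-torsor $\mathcal{O}_U(1)^\times$ over $U$; and one pushes forward along the affine projection. Your cover by the $D(f)$, for homogeneous $f\in R_+$, with $(R_f)_n=\Gamma(X_f,\mathcal{O}(n))$, is a correct and more explicit version of the paper's appeal to $X=\operatorname{Proj}(R)$. It also rightly avoids passing to a Veronese subring, which would change $R$.

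\textbf{The gap.} The step that fails is your claim that $J\subseteq\sqrt{R_+}$ as soon as $Z\neq\emptyset$. Since $R/R_+=R_0=H^0(X,\mathcal{O}_X)$ and $J$ is homogeneous, $V(R_+)\subseteq V(J)$ holds if and only if $J_0=\ker\bigl(H^0(X,\mathcal{O}_X)\to H^0(Z,\mathcal{O}_Z)\bigr)$ consists of nilpotent elements. Equivalently, $Z$ must have dense image in $\operatorname{Spec}(R_0)$. Nonemptiness of $Z$ suffices only when, for example, $R_0$ is a field.

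Without this condition the lemma is false. Take $X=\mathbb{P}^1_{\mathbb{Z}}$ and $Z=\mathbb{P}^1_{\mathbb{F}_p}$. Then $J=pR$ and $\operatorname{Spec}(R)-V(J)=\mathbb{A}^2_{\mathbb{Z}[1/p]}$ is affine, so $H^2_J(R)=0$. This open still contains the vertex over $\mathbb{Z}[1/p]$. Meanwhile the right-hand side contains the summand $H^1(\mathbb{P}^1_{\mathbb{Z}[1/p]},\mathcal{O}(-2))\neq0$.

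Similarly, replacing $J$ by $J\cap R_+$ is not a reduction but a change of statement, because it changes $R\Gamma_J(R)$. For $Z=\emptyset$ the literal statement, with $J=R$, is false: for $X=\mathbb{P}^1_k$ the left side vanishes while $H^1(\mathbb{P}^1_k,\mathcal{O}(-2))\neq0$. There $J=R_+$ is the correct reading.

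So the lemma needs the hypothesis $V(R_+)\subseteq V(J)$, or $J$ replaced by $J\cap R_+$; with that added, your proof is complete. The paper's proof uses the same identification tacitly, when it combines $\operatorname{Spec}(R)-V(R_+)=\mathcal{O}_X(1)^\times$ with $V(J)-V(R_+)=\mathcal{O}_Z(1)^\times$.

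This is harmless in the paper's applications. There $Z$ is $D$ or $D_\infty$, which is isomorphic to $A$, and $T_0=\mathcal{O}(S)\to H^0(A,\mathcal{O}_A)$ is an isomorphism, so $J_0=0$. In Lemma~\ref{lem:second-lc} the lemma is applied with $J=S_+$ and $Z=\emptyset$, that is, with exactly the $J\cap R_+$ convention.
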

\begin{proof}
  The cofibre of $R\Gamma_J(R) \to R$ is identified with $R\Gamma(\mathrm{Spec}(R)-V(J),\mathcal{O})$ by definition. Moreover, for a line bundle $\mathcal{L}$ on a scheme $Y$, the total space $\mathcal{L}^\times$ of the $\mathbf{G}_m$-torsor over $Y$ attached to $\mathcal{L}$ is an affine $Y$-scheme corresponding to the quasi-coherent sheaf of algebras $\bigoplus_{n \in \mathbb{Z}} \mathcal{L}^{\otimes n}$. Consequently, it is enough to identify $\mathrm{Spec}(R)-V(J)$ with $\mathcal{O}_U(1)^\times \simeq \mathcal{O}_X(1)^\times \times_X U$. For this, write  $R_+$ for the ideal of positive degree elements in $R$, and similarly for $(R/J)_+$.  Since $X=\mathrm{Proj}(R)$, the scheme $\mathrm{Spec}(R)-V(R_+)$ is $\mathcal{O}_X(1)^\times$. Moreover, since $R/J$ agrees in sufficiently large degrees with the homogeneous co-ordinate ring of $Z$, we have $Z=\mathrm{Proj}(R/J)$, so $\mathrm{Spec}(R/J) - V( (R/J)_+)$ is $\mathcal{O}_Z(1)^\times \simeq \mathcal{O}_X(1)^\times \times_X Z$. Putting these together shows that  $\mathrm{Spec}(R) - V(J)$ is $\mathcal{O}_U(1)^\times$, as wanted.
\end{proof}

The promised example is the following:

\begin{theorem}
\label{thm:normal-five-quotient}
Let $k=\mathbb Z$ or let $k$ be a field of characteristic different from
$2$. There is a finitely generated flat normal $k$-domain $T$ of dimension
$5$ and a codimension $1$ prime ideal $J\subset T$ such that $H^2_J(T)$ has
infinitely many minimal primes.
\end{theorem}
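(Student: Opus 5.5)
We take $(E\to S,P)$ from Proposition~\ref{prop:abelian-surface-input}, with $S=\operatorname{Spec}(A)$ a Dedekind domain of relative dimension $\le 1$ over $k$. Let $L:=\mathcal O_E([P]-[0])$, a relative degree-zero line bundle on $E/S$. Its restriction to $E_s$ is trivial exactly when $P_s=0$. To turn ``$P_s$ torsion'' into ``$L_s$ trivial'', pass to the fibre product $Y_0 := E\times_S E$ and the bundle $\mathcal M := \mathrm{pr}_1^*L\otimes \mathrm{pr}_2^*L^{-1}$. Alternatively, use the multiplication maps and a family in which triviality occurs infinitely often.

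The cleanest choice is the following. Replace $S$ by $S' := S$ itself and observe that if $P_s$ has order $n_s$, then $[n_s]^*$ trivializes $L_s$; these $n_s$ vary, so this route is awkward. Instead we use the quotient construction from the section title. Put $Y:=\mathbb P_E(\mathcal O\oplus L^{-1})$, a $\mathbb P^1$-bundle over $E$, which is projective, flat and normal over $S$ of relative dimension $2$, hence absolute dimension at most $3$. Let $G=\mathbb Z/2$ act via $[-1]$ on $E$, which sends $L$ to $L^{-1}$, lifted to $Y$ by swapping the two summands. This is where the restriction to characteristic $\neq 2$ enters, through Lemma~\ref{lem:finite-group-quotient}. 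Form $X:=Y/G$ with the descended ample $\mathcal H$ provided by that lemma. Let $T$ be the section ring, of dimension $\dim X+1$. To reach dimension $5$ we want $\dim X=4$. I therefore expect to need an abelian surface $E\times_S E$ (or $E\times E_0$) rather than $E$, i.e.\ $Y=\mathbb P_{A}(\mathcal O\oplus \mathcal M^{-1})$ over an abelian scheme $A/S$ of relative dimension $2$, so that $\dim Y=1+2+1=4$. The zero section $Z\subset Y$ corresponding to $\mathbb V(\mathcal M^{-1})$'s complement is a divisor; its image in $X$ gives the prime $J$ of height $1$.

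The key computation goes through Lemma~\ref{lem:cone-cohomology}: $H^2_J(T)\simeq\bigoplus_n H^1(U,\mathcal H^n|_U)$ with $U=X-Z$. Upstairs, $U_Y=\mathbb V(\mathcal M^{-1})\setminus$(a section), and we arrange $U_Y$ to be the total space of $\mathcal M$ over $A$. This gives $H^1(U_Y,\mathcal O)=\bigoplus_{m\ge0}H^1(A,\mathcal M^{m})$, and in degree $0$ the $G$-invariants compute $H^1(U,\mathcal O)$ because $|G|$ is invertible. By cohomology and base change, together with the fact that $H^i(A_s,\mathcal N)\ne0$ iff $\mathcal N$ is trivial for degree-zero $\mathcal N$, the $A$-module $H^1(A,\mathcal M^m)$ is supported exactly at the closed points $s$ where $\mathcal M_s^{m}$ is trivial, i.e.\ where $mP_s=0$. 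Generically it vanishes since $P_\eta$ has infinite order. It is nonzero at such $s$: use the base change spectral sequence, where the top cohomology $H^2$ commutes with base change and nonvanishing of $H^0(A_s,\mathcal M_s^m)$ forces torsion in $H^1$. Hence $\operatorname{Supp}_S$ of the degree-$0$ piece is the infinite set of closed points from Proposition~\ref{prop:abelian-surface-input}(2). Pulling back along $S\to\operatorname{Spec}$ of the degree-$0$ part of $T$ and using the grading, the support of $H^2_J(T)$ contains the infinitely many height-$4$ primes lying over these points, each minimal, and nothing generic.

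The main obstacle is the precise bookkeeping that turns ``torsion'' into ``trivial'' in a single graded piece, together with controlling the other graded pieces $n\neq0$. Those pieces must not contribute a closed support that swallows these primes, for instance a whole fibre or the generic point. I would handle this by showing that for $n\neq0$ the cohomology $H^1(U,\mathcal H^n)$ is likewise a sum of $H^1(A,\mathcal M^m\otimes(\text{ample}))$-type terms. These vanish by Kodaira-type/Mumford vanishing on abelian schemes, since an ample twist of a degree-zero bundle has no higher cohomology. Then only torsion points contribute, all at closed points of $S$, giving infinitely many minimal primes.
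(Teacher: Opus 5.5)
There is a genuine gap, located exactly where the quotient is supposed to do its work. In your degree-$0$ computation you get $\bigoplus_{m\ge0}H^1(A,\mathcal M^{\otimes m})$ and assert that each summand vanishes at the generic point because $P_\eta$ has infinite order. This fails for $m=0$: $H^1(A,\mathcal O_A)$ is a locally free $\mathcal O(S)$-module of rank $2$, supported on all of $S$. If it survived in $H^2_J(T)$, the prime $T_+$ (the generic point of $V(T_+)\simeq S$) would lie in the support. Every closed point of $S$ would then be a specialization of it, so the support would be the irreducible set $V(T_+)$, with a single minimal prime. You note that $G$-invariants compute $H^1(U,\mathcal O_U)$, but you never take invariants, and nothing else in your argument removes this summand. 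Your worry about the pieces $n\neq0$ concerns the easy part; they do vanish by the ample-twist argument you indicate.

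The missing idea is that the involution is precisely what kills $H^1(A,\mathcal O_A)$, and seeing this also corrects your geometry. Since $G$ exchanges the two sections $D_0,D_\infty$ of $Y=\mathbb P_A(\mathcal O_A\oplus\mathcal M^{-1})$, neither section is $G$-stable. The preimage in $Y$ of the image of $Z$ is therefore $D_0\sqcup D_\infty$, so the open set upstairs is the $\mathbb G_m$-torsor $W=\operatorname{Spec}_A\bigoplus_{r\in\mathbb Z}\mathcal M^{\otimes r}$. It is not the total space of a line bundle, which could not carry this action anyway, since $[-1]^*\mathcal M\simeq\mathcal M^{-1}$. On the torsor, $H^1(W,\mathcal O_W)\simeq\bigoplus_{r\in\mathbb Z}H^1(A,\mathcal M^{\otimes r})$. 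The involution exchanges the summands indexed by $r$ and $-r$, and on the $r=0$ summand it acts through $[-1]_A^*$, which is multiplication by $-1$ on $H^1(A,\mathcal O_A)$. Because $2$ is invertible, the invariants are $\bigoplus_{r>0}H^1(A,\mathcal M^{\otimes r})$, which vanishes generically. Only at this point does your support argument apply: $H^1(A,\mathcal M^{\otimes r})$ has nonzero $\pi_s$-torsion when $\mathcal M_s^{\otimes r}$ is trivial, giving infinitely many minimal primes. This is the essential use of the hypothesis that $2$ is invertible. (Also, the primes of $T$ lying over closed points of $S$ are maximal ideals of height $5$, not height $4$.)
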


\begin{proof}
Choose $S$, $E/S$, and $P\in E(S)$ as in
Proposition~\ref{prop:abelian-surface-input}. Taking suitable self-products, we can find an abelian surface $A/S$, a rigidified\footnote{A rigidified line bundle on an abelian scheme $B \to T$ is a line bundle $M$ on $B$ together with a trivialization along the identity
section such that $M$ has degree $0$ on the geometric fibres $B_t$. Such a pair determines a commutative $T$-group structure on the total space $M^\times$ of the $\mathbf{G}_m$-torsor attached to $M$, compatible with the one on $B$, so $M^\times$ is an extension of $B$ by $\mathbb{G}_m$.} line bundle $\mathcal L\in\operatorname{Pic}^0(A/S)$ whose generic fibre has infinite order and whose restriction $\mathcal{L}_s \in \mathrm{Pic}^0(A_s)$ is torsion for infinitely many closed points $s \in S$. In the case
$k=\mathbb Z$, we may enlarge $S$ to assume $2$ is invertible on $S$. Form the diagram
\[
\begin{tikzcd}[column sep=large]
W:=\operatorname{Spec}_A\left(\displaystyle\bigoplus_{r\in\mathbb Z}
\mathcal L^{\otimes r}\right)
    \arrow[r,hook] \arrow[dr,"\pi"'] &
\overline W:=\mathbb P_A(\mathcal O_A\oplus\mathcal L^{-1})
    \arrow[d,"\overline\pi"] \\
& A.
\end{tikzcd}
\]
Write $D_0 \subset \overline{W}$
and $D_\infty \subset \overline{W}$ for the two evident sections over $\overline{\pi}$, and set
$B=D_0\sqcup D_\infty$. The rigidification of $\mathcal{L}$ makes $W$ into a commutative group scheme fitting into an
exact sequence
\[
1\longrightarrow\mathbb G_m\longrightarrow W\longrightarrow A
\longrightarrow 0.
\]
Let $\tau=[-1]_W$ be inversion in this group scheme; this lifts $[-1]_A$ and
restricts to inversion on $\mathbb G_m$, so it extends to $\overline W$,  where it exchanges $D_0$ and $D_\infty$.

Set $G=\langle\tau\rangle$, so $G$ acts compatibly on both $\overline{W}$ and $A$. Let $\mathcal N$ be a sufficiently
ample line bundle on $A$ such that
\[
\mathcal A:=
\mathcal O_{\overline W}(D_0+D_\infty)
\otimes\overline\pi^*\mathcal N
\]
is ample on $\overline W$. The scheme $\overline W$ is a flat projective
normal integral $S$-scheme, so Lemma~\ref{lem:finite-group-quotient} gives a
finite quotient
\[
q:\overline W\longrightarrow X:=\overline W/G
\]
with $X$ flat, projective, normal, and integral over $S$, and an ample line
bundle $\mathcal H$ on $X$ such that, for some $e>0$,
\begin{equation}\label{eq:quotient-descent}
q^*\mathcal H\simeq
(\mathcal A\otimes\tau^*\mathcal A)^{\otimes e}.
\end{equation}
We may also assume that the full section ring of $\mathcal H$ is generated
in degree $1$.

Put $D=B/G$ and $U=W/G=X-D$. Since $\tau$ exchanges the two components of $B$, it acts freely in a
neighbourhood of $B$ (in fact $A \simeq D$ via either component of $B$) and $D \to X$ is the inclusion of a Cartier divisor. These fit into the commutative diagram
\[
\begin{tikzcd}[column sep=large,row sep=large]
B := D_0 \sqcup D_\infty \arrow[r,hook] \arrow[d,"{\text{finite \'etale}}"'] &
\overline W \arrow[d,"{q\ \text{finite}}"'] &
W \arrow[l,hook'] \arrow[d] \\
D  \arrow[r,hook] & X & U \arrow[l,hook'] 
\end{tikzcd}
\]
with Cartesian squares. As $D_0 \cap W = D_\infty \cap W = \emptyset$, we have
\begin{equation}\label{eq:normal-quotient-polarization}
q^*\mathcal H|_W\simeq\pi^* (\mathcal N\otimes[-1]_A^*\mathcal N)^{\otimes e}.
\end{equation}
Set
\[
T:=\bigoplus_{n\geq0}H^0(X,\mathcal H^{\otimes n})
\]
and let $J\subset T$ be the homogeneous ideal cutting out $D$. The ring $T$
is finitely generated and normal by Lemma~\ref{lem:finite-group-quotient}.
(For $k=\mathbb Z$, the ring $T$ is torsion-free and hence $\mathbb{Z}$-flat.) Since $\dim(X)=4$, we have $\dim(T)=5$. Restriction to $D$ has kernel $J$
and target a domain, so $J$ is prime. By Serre vanishing, $T/J$ agrees in all
sufficiently large degrees with the section ring of
$(D,\mathcal H|_D)$, so $\dim(T/J)=4$ and $J$ has codimension $1$. Lemma~\ref{lem:cone-cohomology} gives
\begin{equation}\label{eq:normal-quotient-cone}
H^2_J(T)\simeq
\bigoplus_{m\in\mathbb Z}H^1(U,\mathcal H^{\otimes m}|_U).
\end{equation}
Since $2$ is invertible on $S$, taking $G$-invariants is exact, so the finite
quotient $W\to U$ (and \eqref{eq:normal-quotient-polarization}) gives
\[
R\Gamma(U,\mathcal H^{\otimes m}|_U)
\simeq R\Gamma(W,\pi^*\mathcal M^{\otimes m})^G,
\]
where $\mathcal{M} = (\mathcal{N} \otimes [-1]_A^* \mathcal{N})^{\otimes e}$. As $\pi:W\to A$ is affine,
\begin{equation}\label{eq:normal-quotient-cohomology}
H^1(W,\pi^*\mathcal M^{\otimes m})
\simeq
\bigoplus_{r\in\mathbb Z}
H^1(A,\mathcal L^{\otimes r}\otimes\mathcal M^{\otimes m}).
\end{equation}
If $m>0$, all these groups vanish: by cohomology and base change, it is enough to prove the analogous statement for the fibres over points of $S$, where it follows from standard facts on abelian varieties \cite[Vanishing theorem on page~150, Corollary on page~159]{MumfordAV}. If $m<0$, they again vanish by the same argument combined with Serre duality.

For $m=0$, the involution exchanges the summands indexed by $r$ and $-r$ in
\eqref{eq:normal-quotient-cohomology}.
On the $r=0$ summand $H^1(A,\mathcal O_A)$ it acts by $-1$. Since $2$ is
invertible, this summand has no invariants, while each pair indexed by
$\{r,-r\}$, with $r>0$, contributes one copy of
$H^1(A,\mathcal L^{\otimes r})$ to the invariants. As the $m\neq0$ summands of \eqref{eq:normal-quotient-cone} vanish, we learn that 
\begin{equation}\label{eq:normal-quotient-final}
H^2_J(T)\simeq
\bigoplus_{r>0}H^1(A,\mathcal L^{\otimes r}),
\end{equation}
is a module over $T/T_+ = T_0=H^0(X,\mathcal O_X)=H^0(\overline W,\mathcal O_{\overline W})^G
=\mathcal O(S)$; we regard its support as a subset of
$S=V(T_+)\subset\operatorname{Spec}(T)$. The generic point $\eta\in S$ does
not belong to this support: $\mathcal L_\eta^{\otimes r}$ is a
nontrivial degree-zero line bundle for every $r>0$, and hence has no
cohomology \cite[Corollary on page~159]{MumfordAV}.  Now let $s\in S$ be a closed point for which $\mathcal L_s$ is torsion, let
$\pi_s\in\mathcal O(S)$ be a uniformizer, and choose $r>0$ such that
$\mathcal L_s^{\otimes r}\simeq\mathcal O_{A_s}$. Since
$H^0(A,\mathcal L^{\otimes r})=0$ by generic nontriviality of $\mathcal{L}^{\otimes r}$, the long exact
sequence on cohomology for multiplication by $\pi_s$ gives
\[
H^1(A,\mathcal L^{\otimes r})[\pi_s]
\simeq H^0(A_s,\mathcal O_{A_s})\neq0.
\]
Thus every such $s$ is an associated point of $H^2_J(T)$. Since $S$ is
one-dimensional and its generic point does not belong to the support, each
of these closed points is minimal in the support. As there are infinitely
many $s\in S$ for which $\mathcal L_s$ is torsion, we obtain the desired infinite set of minimal associated points.
\end{proof}

\begin{remark}
The preceding construction also works when $k$ has characteristic $2$, but the proof must be modified as taking $G$-invariants is not exact. More precisely,  the contributions of $m \neq 0$ summands to \eqref{eq:normal-quotient-cone} vanish as before (via vanishing theorems), as do the contributions of the $r \neq 0$ summand to \eqref{eq:normal-quotient-cohomology} (as the action switching the $r$-th summand with the $(-r)$-th summand is a free action of $G$ on the summand indexed by the pair $\{r,-r\}$ for $r \neq 0$). For the $r=0$ summand, one must show that $H^1(A/G,\mathcal{O}_{A/G}) = 0$, which was checked in enough special cases in \cite{ShiodaKummer}. We omit further details as \S \ref{sec:normal-contract} gives another construction that works uniformly in all characteristics.
\end{remark}

\section{A normal $5$-dimensional  example via contractions}
\label{sec:normal-contract}

In this section, we show the following, which in particular proves Theorem~\ref{main-theorem} (1) uniformly in all characteristics, including $2$.

\begin{theorem}
\label{thm:normal-five-blowups-charp}
Let $k$ be a field of characteristic $p>0$. There is a finitely generated flat
normal $k$-domain $T$ of dimension $5$ and a codimension $1$ prime ideal
$J\subset T$ such that $H^2_J(T)$ has infinitely many minimal primes.
\end{theorem}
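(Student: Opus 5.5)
The plan is to keep the geometric input of Theorem~\ref{thm:normal-five-quotient} and remove the one defect of the naive construction, which is the $r=0$ summand. Take $S$, $E/S$ and $P$ from Proposition~\ref{prop:abelian-surface-input}, and form the abelian surface $A/S$ and the rigidified $\mathcal L\in\operatorname{Pic}^0(A/S)$ as before. Let $W\subset\overline W=\mathbb P_A(\mathcal O_A\oplus\mathcal L^{-1})$ be as before, with boundary $B=D_0\sqcup D_\infty$.

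If we applied Lemma~\ref{lem:cone-cohomology} directly to $(\overline W,B)$, the degree-$0$ part of the answer would be $\bigoplus_{r\in\mathbb Z}H^1(A,\mathcal L^{\otimes r})$. The $r=0$ summand $H^1(A,\mathcal O_A)$ is a free $\mathcal O(S)$-module, so its support contains the generic point of $S$ and destroys minimality. In characteristic $p$ we cannot divide by $[-1]$ to kill it, so we kill it geometrically by a contraction instead of a quotient.

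\textbf{Step 1: the contraction.} Fix a sufficiently ample $\mathcal N$ on $A$. Consider $\overline W':=\mathbb P_A(\mathcal O_A\oplus\mathcal L^{-1}\otimes\mathcal N^{-1})$. On it the section $D_\infty'$ has anti-ample normal bundle, so by Grauert-type contraction (realized concretely through the relative section ring $\bigoplus_{r\ge0}H^0(A,(\mathcal L\otimes\mathcal N)^{\otimes r})$, which is finitely generated) it can be contracted over $S$ to a section $\sigma$. The result is a projective-cone bundle $X\to S$ over the section $\sigma$.

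Take $D\subset X$ to be the image of the other section $D_0'$, plus, if needed, a further Cartier divisor through the vertex $\sigma$ pulled back from $\mathcal N$, so that $D$ is an irreducible ample Cartier divisor. The point of the twist by $\mathcal N$ is that it couples the $\mathbb G_m$-weight $r$ with the cone grading $m$ from Lemma~\ref{lem:cone-cohomology}. After taking a suitable ample $\mathcal H$ on $X$ (checked by the Nakai criterion on the blowup $\overline W'$), I expect to arrive at
\[
H^1(U,\mathcal H^{\otimes m}|_U)\simeq\bigoplus_{r}H^1\bigl(A,\mathcal L^{\otimes r}\otimes\mathcal N^{\otimes(r-em)}\bigr).
\]
Mumford's vanishing kills every term in which the $\mathcal N$-exponent is nonzero. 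The surviving terms are then $H^1(A,\mathcal L^{\otimes r})$ for $r=em$.

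\textbf{Step 2: arrange that $r=0$ does not survive.} The shape of the contracted locus must ensure that the surviving indices are exactly $r>0$. Concretely, the weight-$0$ part should be absorbed by the vertex, where $X$ is normal and the relevant local cohomology is controlled by $H^1(A,\mathcal O_A)$ only through a finite-length, $S$-closed-supported piece, or vanishes outright. The aim is an isomorphism
\[
H^2_J(T)\simeq\bigoplus_{r>0}H^1(A,\mathcal L^{\otimes r})\oplus Q,
\]
where $Q$ is supported on a closed subset not containing the generic point of $S$, or better, $Q=0$. Here $T$ is the section ring of $(X,\mathcal H)$, replaced by a Veronese so that it is generated in degree $1$, and $J$ is the ideal of $D$. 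Normality, finite generation, $\dim T=5$, primality of $J$ and $\operatorname{codim}J=1$ then follow verbatim as in the proof of Theorem~\ref{thm:normal-five-quotient}.

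\textbf{Step 3: conclude.} Once this isomorphism is established, the final argument is identical to the end of that proof. The generic point of $S$ is not in the support because $\mathcal L_\eta^{\otimes r}$ is nontrivial for all $r>0$. At each closed $s$ with $\mathcal L_s^{\otimes r}$ trivial, the $\pi_s$-torsion of $H^1(A,\mathcal L^{\otimes r})$ equals $H^0(A_s,\mathcal O)\neq0$. This gives infinitely many minimal primes.

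\textbf{Main obstacle.} I expect the hard step to be Step 2: showing that the contraction really eliminates the $r=0$ summand $H^1(A,\mathcal O_A)$ without introducing new generically supported cohomology, while keeping $X$ normal and $D$ Cartier. I would first compute $R\Gamma(U,\mathcal O_U)$ via the Leray spectral sequence for the resolution $\overline W'\to X$, using the exact sequence for the exceptional divisor $D_\infty'\simeq A$ and Mumford vanishing on $A$.
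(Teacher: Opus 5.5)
\textbf{There is a genuine gap, and it is the step you flag.} Step~2 is only stated as an aim, and the construction of Step~1 cannot achieve it.

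With $\mathcal K:=\mathcal L\otimes\mathcal N$ ample, the section of $\mathbb P_A(\mathcal O_A\oplus\mathcal K^{-1})$ that can be contracted is the one with normal bundle $\mathcal K^{-1}$. With the paper's conventions this is $D_0'$, not $D_\infty'$; the other section has normal bundle $\mathcal K$. The complement of that other section is the total space of $\mathcal K^{-1}$. Contracting therefore yields the relative projective cone over $(A,\mathcal K)$, in which the surviving section is an ample Cartier divisor, as you in fact require of $D$.

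But the complement of an ample effective Cartier divisor in a scheme projective over the affine $S$ is affine. Here $U=X-D$ is the relative affine cone $\operatorname{Spec}\bigoplus_{r\ge0}H^0(A,\mathcal K^{\otimes r})$, and removing an extra Cartier divisor through the vertex keeps it affine. So $H^1(U,\mathcal H^{\otimes m}|_U)=0$ for every $m$, and Lemma~\ref{lem:cone-cohomology} gives $H^2_J(T)=0$.

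Conceptually, the twist turns the weight-$r$ pieces into $\mathcal L^{\otimes r}\otimes\mathcal N^{\otimes r}$, which have no $H^1$ for $r\neq0$. It thus destroys precisely the summands $H^1(A,\mathcal L^{\otimes r})$ you need, leaving only the bad summand $H^1(A,\mathcal O_A)$, which the contraction then kills. Your heuristic formula would not help either: its surviving index $r=em$ still contains $r=0$ at $m=0$.

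\textbf{The missing idea} is to make the zero section contractible \emph{without} twisting $\mathcal L$. The paper keeps $W=\mathbb V_A(\mathcal L^{-1})=\operatorname{Spec}_A\bigoplus_{r\ge0}\mathcal L^{\otimes r}$, which has only nonnegative weights, so no $r<0$ summands ever appear. It then blows up a smooth ample divisor $C\subset D_0$ with $\mathcal O_A(C)=\mathcal M$. The strict transform $D\simeq A$ now has ample conormal bundle $\mathcal L\otimes\mathcal M$, while $H^1(\widetilde W,\mathcal O_{\widetilde W})=\bigoplus_{r\ge0}H^1(A,\mathcal L^{\otimes r})$ is unchanged.

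One contracts $D$ to a section $S\to X$ using the relatively semiample bundle $\mathcal Q=b^*\mathcal N\otimes\mathcal O(-E)$, checked fibrewise via Keel's criterion. One then takes $J$ to be the ideal of $D_\infty$. This divisor is \emph{not} ample, so $U=X-D_\infty$ is non-affine and contains the contracted abelian scheme.

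Lemma~\ref{lem:abelian-contraction} then identifies $H^1(U,\mathcal H^{\otimes m}|_U)$ with $\ker\bigl(H^1(\widetilde W,\mathcal G^{\otimes em})\to H^1(A,\mathcal O_A)\bigr)$. Lemma~\ref{lem:blowup-tensor-power-cohomology} computes this kernel:
\begin{itemize}
\item for $m=0$, restriction is projection onto the $r=0$ summand, so the kernel is $\bigoplus_{r>0}H^1(A,\mathcal L^{\otimes r})$;
\item for $m>0$, restriction is an isomorphism, so the kernel is zero;
\item for $m<0$, the source itself vanishes, which requires the analysis of $R^1b_*\mathcal O_{\widetilde W}(nE)$.
\end{itemize}
This is exactly the statement your Step~2 leaves unproved. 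With it in hand, your Step~3 goes through as written.
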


Our construction will be a variant of the one used to prove Theorem~\ref{thm:normal-five-quotient}: instead of using finite quotients to kill certain degree $0$ summands, we use a birational contraction to do so. The proof will occupy the rest of this section. 

\subsection{The blowup and its cohomology}\label{subsec:blowup-cohomology}

In this subsection, we construct the blowup used in our construction, and compute the cohomology of some relevant line bundles.

Choose $S$, an abelian
surface $p:A\to S$, and a line bundle
$\mathcal L\in\operatorname{Pic}^0(A/S)$ such that $\mathcal O(S)$ is a PID,
$\mathcal L_\eta$ has infinite order, and $\mathcal L_s$ is torsion for
infinitely many closed points $s\in S$; this can be done as in the opening of the proof of
Theorem~\ref{thm:normal-five-quotient}. Form the diagram
\[
\begin{tikzcd}[column sep=large]
W:=\operatorname{Spec}_A\left(\displaystyle\bigoplus_{r\geq 0}
\mathcal L^{\otimes r}\right)
    \arrow[r,hook] \arrow[dr,"\pi"'] &
\overline W:=\mathbb P_A(\mathcal O_A\oplus\mathcal L^{-1})
    \arrow[d,"\overline\pi"] \\
& A,
\end{tikzcd}
\]
Let $D_0\subset W\subset\overline W$ be the zero section and let
$D_\infty=\overline W-W$ be the section at infinity. In other words, $D_0$ corresponds to the second projection $\mathcal{O}_A \oplus \mathcal{L}^{-1} \to \mathcal{L}^{-1}$
while $D_\infty$ corresponds to the first projection. If one could contract $D_0$, then it would be rather straightforward to show that a suitable section ring for the contracted variety verified Theorem~\ref{thm:normal-five-blowups-charp}. But $D_0$ cannot be contracted as its normal bundle is numerically trivial. Instead, we shall blow it up along an ample divisor, and then contract it. Choose a sufficiently ample line bundle $\mathcal M$ on $A/S$ such that 
\[
\mathcal M=\mathcal O_A(C)
\]
for an effective relative Cartier divisor $C\subset A$, smooth over $S$.
Blow up $C\subset D_0\subset\overline W$:
\[
b:\widetilde{\overline W}:=\operatorname{Bl}_C(\overline W)
\longrightarrow\overline W.
\]
Write $E$ for the exceptional divisor and $D$ for the strict transform of
$D_0$. Since $C$ is Cartier in $D_0$, the restriction of $b$ identifies
$D$ with $D_0$, and hence with $A$. This construction is summarized in the following commutative diagram
\[
\begin{tikzcd}[column sep=large,row sep=normal]
C=\mathbb P_C(N_{C/D_0}^{\vee}) \arrow[rr] \arrow[d,hook] &&
E=\mathbb P_C(N_{C/\overline W}^{\vee}) \arrow[rr,"b"] \arrow[d,hook] &&
C \arrow[d,hook,"{\mathrm{ample}}"'] \\
A\simeq D \arrow[rr,hook] &&
b^{-1}(D_0)=D\cup E \arrow[rr,"b"] \arrow[d,hook] &&
D_0\simeq A \arrow[d,hook,"{\mathrm{divisor}}"'] \\
&& \widetilde W:=\operatorname{Bl}_C(W) \arrow[rr,"b"] \arrow[d,hook] &&
W=\mathbb V_A(\mathcal L^{-1}) \arrow[d,hook,"{\mathrm{open}}"'] \\
&& \widetilde{\overline W}:=\operatorname{Bl}_C(\overline W)
  \arrow[rr,"b"]
  \arrow[ddrr,bend right=15,"\widetilde q"'] &&
\overline W=\mathbb P_A(\mathcal O_A\oplus\mathcal L^{-1})
  \arrow[d,"\overline\pi"]
  \arrow[dd,bend left=45,"\overline q"] \\
&&&& A \arrow[d,"p"] \\
&&&& S.
\end{tikzcd}
\]
where all squares are pullback squares (with the squares relating columns 2 and 3 being derived pullback squares), the maps $\overline{q}$ and $\widetilde{q}$ are  defined to make the diagram commute, and the horizontal compositions in the top two rows are the natural isomorphisms. Any unspecified closed immersion out of $C$ or $D$ is the one coming from the above diagram. Our goal is to contract $A \simeq D \subset \widetilde{\overline{W}}$. For this, consider the line bundles
\[
\begin{aligned}
\mathcal N
&=\mathcal O_{\overline W}(2D_\infty)
  \otimes\overline\pi^*\mathcal M
&&\in\operatorname{Pic}(\overline W), \text{ and } \\
\mathcal Q
&=b^*\mathcal N\otimes
  \mathcal O_{\widetilde{\overline W}}(-E)
&&\in\operatorname{Pic}(\widetilde{\overline W}).
\end{aligned}
\]
We arrange the choice of $\mathcal{M}$ so $\mathcal{N}$ is ample. We shall eventually use $\mathcal{Q}$ to carry out the promised contraction. For future calculations, we will need the line bundle 
\begin{equation}\label{eq:blowup-G}
\mathcal G:=b^*\pi^*\mathcal M\otimes
\mathcal O_{\widetilde W}(-E)
\simeq\mathcal Q|_{\widetilde W} \in \mathrm{Pic}(\widetilde{W})
\end{equation}
as well as cohomology of all of its powers on $\widetilde{W}$, as recorded next:

\begin{lemma}[Cohomology of blowups]
\label{lem:blowup-tensor-power-cohomology}
For every $d\in\mathbb Z$,
\[
H^1(\widetilde W,\mathcal G^{\otimes d})\simeq
\begin{cases}
H^1(A,\mathcal O_A),&d>0,\\[2pt]
\displaystyle\bigoplus_{r\geq0}H^1(A,\mathcal L^{\otimes r}),&d=0,\\[6pt]
0,&d<0.
\end{cases}
\]
Moreover, $\mathcal G|_D\simeq\mathcal O_D$, and under the identification
$D\simeq A$, the restriction map
\[
H^1(\widetilde W,\mathcal G^{\otimes d})
\longrightarrow H^1(D,\mathcal O_D)=H^1(A,\mathcal O_A)
\]
is an isomorphism if $d>0$, projection onto the $r=0$ summand if $d=0$,
and the zero map if $d<0$.
\end{lemma}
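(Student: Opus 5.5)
Let $\rho=\pi\circ b:\widetilde W\to A$ be the composite. The plan is to compute $H^1(\widetilde W,\mathcal G^{\otimes d})$ by pushing forward along $b$ and then along the affine map $\pi$. The resulting groups are cohomology groups of line bundles on the abelian scheme $A$, and these are controlled by the vanishing facts from \cite{MumfordAV} already used in the proof of Theorem~\ref{thm:normal-five-quotient}.

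\textbf{Step 1: push forward along $b$.} Since $b$ is the blowup of the regularly embedded codimension-$2$ locus $C\subset W$, we have $Rb_*\mathcal O_{\widetilde W}(-dE)=I_C^d$ for $d\geq0$. Here $I_C\subset\mathcal O_W$ is the ideal of $C$, and the vanishing of higher direct images is the standard fact for blowups of regular embeddings. Hence for $d\geq0$,
\[
H^1(\widetilde W,\mathcal G^{\otimes d})\simeq H^1(W,I_C^d\otimes\pi^*\mathcal M^{\otimes d}).
\]
In local coordinates on $W$, write $t$ for the fibre coordinate of $\pi$, whose zero locus is $D_0$, and $f$ for a local equation of $C$ in $A$. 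Then $I_C=(t,f)$. Since $\pi$ is affine and $\pi_*\mathcal O_W=\bigoplus_{r\geq0}\mathcal L^{\otimes r}t^r$, the ideal $I_C^d$ decomposes by $t$-degree:
\[
\pi_*I_C^d=\bigoplus_{r\geq0}\mathcal L^{\otimes r}\otimes\mathcal M^{-\max(d-r,0)}.
\]
Twisting by $\mathcal M^{\otimes d}$ gives the summands $\mathcal L^{\otimes r}\otimes\mathcal M^{\otimes\min(r,d)}$. For $1\leq r$ and $d\geq1$, $\mathcal M^{\otimes\min(r,d)}$ is ample and $\mathcal L^{\otimes r}$ has degree $0$, so $H^1$ of these summands vanishes fibrewise, hence globally by cohomology and base change. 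Only the $r=0$ summand survives, giving $H^1(A,\mathcal O_A)$ for $d>0$. For $d=0$ we have $\mathcal G^{\otimes 0}=\mathcal O$, and $Rb_*\mathcal O=\mathcal O$ gives $\bigoplus_{r\geq 0}H^1(A,\mathcal L^{\otimes r})$.

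\textbf{Step 2: the case $d<0$.} Here $\mathcal O(-dE)=\mathcal O(|d|E)$, and I would use the sequences $0\to\mathcal O((j-1)E)\to\mathcal O(jE)\to\mathcal O_E(jE)\to0$. The restriction $\mathcal O_E(E)$ is $\mathcal O(-1)$ on the $\mathbb P^1$-bundle $E\to C$, so $Rb_*\mathcal O_E(jE)=0$ for $1\leq j\leq 1$. Beyond $j=1$ the pushforward can acquire $R^1$, namely $R^1b_*\mathcal O_E(jE)\simeq H^1(\mathbb P^1,\mathcal O(-j))$ twisted along $C$. The cleaner route is duality: $\omega_{\widetilde W/A}$ involves $\mathcal O(E)$, and relative duality for $\rho$ converts $H^1(\mathcal G^{\otimes d})$ into a group of the form just computed with ample twist. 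Failing that, I would compute $R\rho_*\mathcal G^{\otimes d}$ directly and check that every graded piece is $\mathcal L^{\otimes r}\otimes(\text{anti-ample})$ or lives in degree $\geq 2$, so that $H^1$ vanishes by the anti-ample vanishing (Serre duality plus the Mumford vanishing). I expect this $d<0$ bookkeeping to be the main obstacle.

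\textbf{Step 3: restriction to $D$.} The isomorphism $\mathcal G|_D\simeq\mathcal O_D$ holds because $D\cap E=C$, with $\mathcal O(E)|_D=\mathcal O_D(C)=\mathcal M$, while $b^*\pi^*\mathcal M|_D=\mathcal M$. For the restriction map, restricting to $D$ corresponds under the decompositions above to projecting to $t$-degree $0$, i.e.\ to $\mathcal O_A$. This projection is an isomorphism on the $r=0$ part for $d\geq0$, which gives the isomorphism for $d>0$ and the projection onto the $r=0$ summand for $d=0$. For $d<0$ the source vanishes, so the restriction map is zero.
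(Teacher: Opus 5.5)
Your Steps 1 and 3 for $d\ge 0$ agree with the paper. You push forward along $b$ using $Rb_*\mathcal O_{\widetilde W}(-dE)=\mathcal I_C^d$, then decompose $\pi_*$ by $\mathbb G_m$-weight into $\bigoplus_{r\ge0}\mathcal L^{\otimes r}\otimes\mathcal M^{\otimes\min\{r,d\}}$. The gap is the case $d<0$. You leave it as a plan, and neither version of the plan works as stated.

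The ``relative duality for $\rho$'' route is not available. The map $\rho=\pi\circ b$ is not proper, since $\pi$ is affine with $\mathbb A^1$ fibres. So no duality statement turns $H^1(\widetilde W,\mathcal G^{\otimes d})$ into one of the groups from Step 1. Duality for the proper map $b$ alone only rewrites $Rb_*\mathcal O_{\widetilde W}(nE)$ as $R\mathcal{H}om_{\mathcal O_W}(\mathcal I_C^{n-1},\mathcal O_W)$, with $n=-d$. That leaves the same degree-one term to analyse.

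Your fallback criterion also aims at the wrong thing. The problematic piece of $R\rho_*\mathcal G^{\otimes d}$ is $\pi_*\bigl(\pi^*\mathcal M^{-n}\otimes R^1b_*\mathcal O_{\widetilde W}(nE)\bigr)$. It sits in degree exactly $1$, is supported on $C$, and contributes to $H^1$ through its global sections. Anti-ample vanishing on $A$ says nothing about it.

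The missing argument is the one in the paper.
\begin{enumerate}
\item Since $b_*\mathcal O_{\widetilde W}(nE)=\mathcal O_W$, the projection formula gives $b_*\mathcal G^{-n}\simeq\pi^*\mathcal M^{-n}$. Then $H^1(W,\pi^*\mathcal M^{-n})=\bigoplus_{r\ge0}H^1(A,\mathcal M^{-n}\otimes\mathcal L^{\otimes r})=0$ by Serre duality and Mumford's vanishing.
\item By the Leray spectral sequence for $b$, it then suffices to prove $H^0\bigl(W,\pi^*\mathcal M^{-n}\otimes R^1b_*\mathcal O_{\widetilde W}(nE)\bigr)=0$.
\item The exact sequences you wrote down give a filtration of $R^1b_*\mathcal O_{\widetilde W}(nE)$ with subquotients $R^1b_*\mathcal O_E(-j)\simeq\Gamma^{j-2}(N_{C/W})\otimes\det N_{C/W}$ for $2\le j\le n$.
\item The normal sequence $0\to\mathcal M|_C\to N_{C/W}\to\mathcal L^{-1}|_C\to0$ refines this, after twisting by $\mathcal M^{-n}$, into line bundles $\mathcal M^{\otimes(a+1-n)}\otimes\mathcal L^{\otimes(a-j+1)}|_C$ with $0\le a\le j-2$.
\end{enumerate}
The essential point is that $j\le n$ forces $a+1-n<0$. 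These bundles therefore have negative degree on the fibres of $C\to S$ and no global sections. Without this computation, the vanishing for $d<0$ is unproved. So is your Step 3 claim that the restriction map is zero for $d<0$, since it rests on that vanishing.
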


\begin{proof}
The restriction of $b$ identifies $D$ with $D_0$, and $E|_D=C$. Hence
\[
\mathcal G|_D\simeq
\mathcal M\otimes\mathcal O_A(-C)\simeq\mathcal O_D.
\]
Let $\mathcal I_C\subset\mathcal O_W$ be the ideal of $C$. For $d>0$, the
standard blowup calculation gives
\[
b_*\mathcal G^{\otimes d}
=(\pi^*\mathcal M^{\otimes d}) \otimes \mathcal I_C^d,
\qquad
R^ib_*\mathcal G^{\otimes d}=0\quad(i>0).
\]
The action of $\mathbb G_m$ scaling the fibres of $W$ identifies
$\mathbb G_m$-equivariant quasi-coherent sheaves on $W$ with graded
quasi-coherent modules over the graded $\mathcal O_A$-algebra
\[
\pi_*\mathcal O_W=\bigoplus_{r\geq0}\mathcal L^{\otimes r}.
\]
The ideal $\mathcal I_C$ is $\mathbb G_m$-stable and corresponds to the
graded submodule
\[
\pi_*\mathcal I_C
=\mathcal M^{-1}\oplus\bigoplus_{r>0}\mathcal L^{\otimes r}
\subset
\mathcal O_A\oplus\bigoplus_{r>0}\mathcal L^{\otimes r} = \pi_* \mathcal{O}_W
\]
By keeping track of indices, one learns that the degree-$r$ piece of $\pi_*\mathcal I_C^d$ is $\mathcal L^{\otimes r}\otimes
\mathcal M^{-\max\{d-r,0\}}$. Tensoring by
$\mathcal M^{\otimes d}$ therefore gives
\[
\pi_*b_*\mathcal G^{\otimes d}
\simeq\bigoplus_{r\geq0}
\mathcal L^{\otimes r}\otimes
\mathcal M^{\otimes\min\{r,d\}}.
\]
For $r>0$, the corresponding summand is an ample line bundle tensored with
an element of $\operatorname{Pic}^0(A/S)$, so it has no higher cohomology.
The $r=0$ summand is $\mathcal O_A$. This proves the assertion for $d>0$;
restriction to $D$ is projection onto the $r=0$ summand, so it induces the
asserted isomorphism on $H^1$.

For $d=0$, the blowup does not change structure-sheaf cohomology, and hence
\[
H^1(\widetilde W,\mathcal O_{\widetilde W})
\simeq\bigoplus_{r\geq0}H^1(A,\mathcal L^{\otimes r}).
\]
Restriction to the zero section, and hence to its strict transform $D$, is
projection onto the $r=0$ summand.

Finally, for $d < 0$, we must prove a vanishing theorem. Write $d=-n$ with $n>0$. We have
$b_*\mathcal O_{\widetilde W}(nE)=\mathcal O_W$, so the projection formula
gives $b_*\mathcal G^{-n}\simeq\pi^*\mathcal M^{-n}$. Therefore
\[
H^1(W,b_*\mathcal G^{-n})
\simeq H^1(W,\pi^*\mathcal M^{-n})
\simeq\bigoplus_{r\geq0}
H^1(A,\mathcal M^{-n}\otimes\mathcal L^{\otimes r})=0
\]
by Serre duality and the vanishing theorem for ample line bundles on the
abelian surface. By the Leray spectral sequence for $b$ and the projection formula, it therefore remains to check that
\begin{equation}\label{eq:blowup-negative-R1-vanishing}
H^0\!\left(W,\pi^*\mathcal M^{-n}\otimes
R^1b_*\mathcal O_{\widetilde W}(nE)\right)=0.
\end{equation}
On the exceptional divisor, $b:E=\mathbb P_C(N_{C/W}^{\vee})\to C\subset W$
is the projection followed by the inclusion. We regard sheaves on $C$ as
sheaves on $W$ via extension by zero. Since
$\mathcal O_E(E)=\mathcal O_E(-1)$, pushing forward the sequence
\[ 0 \to \mathcal{O}_{\widetilde{W}} \to \mathcal{O}_{\widetilde{W}}(E) \to \mathcal{O}_E(E) = \mathcal{O}_E(-1) \to 0 \]
shows that
$R^1b_*\mathcal O_{\widetilde W}(E)=0$. For $n\geq2$, inductively pushing forward twists of the above sequence shows that
$R^1b_*\mathcal O_{\widetilde W}(nE)$ has a filtration with successive
quotients
\[
R^1b_*\mathcal O_E(-j),\qquad 2\leq j\leq n.
\]
Now
\begin{equation}\label{eq:blowup-exceptional-quotients}
\begin{aligned}
R^1b_*\mathcal O_E(-j)
&\simeq
\left(\operatorname{Sym}^{j-2}(N_{C/W}^{\vee})\right)^{\vee}
\otimes_{\mathcal O_C}\det(N_{C/W}) \\
&=\Gamma^{j-2}(N_{C/W})\otimes_{\mathcal O_C}\det(N_{C/W}),
\qquad 2\leq j\leq n,
\end{aligned}
\end{equation}
where the isomorphism is the rank-two case of the projective-bundle
cohomology formula \cite[Tag 01XX]{stacks-project} (equivalently, relative
duality), and
$\Gamma^m(\mathcal E):=(\operatorname{Sym}^m(\mathcal E^\vee))^\vee$
denotes the $m$th divided power. The normal sequence for $C \hookrightarrow A \hookrightarrow W$ is
\[
0\longrightarrow\mathcal M|_C
\longrightarrow N_{C/W}
\longrightarrow\mathcal L^{-1}|_C
\longrightarrow0.
\]
The divided-power filtration associated to this sequence shows that, after
tensoring the sheaves in \eqref{eq:blowup-exceptional-quotients} by
$\mathcal M^{-n}|_C$, they have filtrations with graded pieces
\[
\mathcal M^{\otimes(a+1-n)}|_C\otimes_{\mathcal O_C}
\mathcal L^{\otimes(a-j+1)}|_C,
\qquad 0\leq a\leq j-2.
\]
Since $j\leq n$ and $a \leq j-2$, every exponent $a+1-n$ is negative. As $\mathcal M|_C$ is
relatively ample and $\mathcal L|_C$ has degree zero on every fibre, these
line bundles have negative fibrewise degree, and hence have no global
sections, which gives \eqref{eq:blowup-negative-R1-vanishing}.
\end{proof}

\subsection{Semiampleness and the contraction}\label{subsec:semiample-contraction}

In this subsection, we prove the required semiampleness statement and use it to construct the contraction of the strict transform of the zero section. In the next lemma, for $s \in S$, use a subscript of $s$ to denote the fibre over $s$.

\begin{lemma}[Semiampleness of $\mathcal{Q}$]
\label{lem:blowup-semiample}
After choosing $C$ sufficiently ample, the line bundle $\mathcal Q$ is
$\widetilde q$-semiample. Moreover, for $s \in S$, the exceptional locus for $\mathcal{Q}_s$ is $D_s$.
\end{lemma}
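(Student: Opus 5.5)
We will show that $\mathcal Q$ is $\widetilde q$-globally generated after replacing $C$ by a sufficiently ample divisor. Since $\widetilde q$ is projective and $\mathcal O(S)$ is affine, this amounts to showing that the global sections of $\mathcal Q$ generate $\mathcal Q$ at every point of $\widetilde{\overline W}$. Write
\[
b^*\mathcal N\otimes\mathcal O(-E)\simeq b^*\bigl(\mathcal N\otimes\mathcal I_C\bigr)\cdot\mathcal O_{\widetilde{\overline W}}.
\]
Then $H^0(\widetilde{\overline W},\mathcal Q)=H^0(\overline W,\mathcal N\otimes\mathcal I_C)$, and $\mathcal Q$ is globally generated as soon as $\mathcal N\otimes\mathcal I_C$ is globally generated on $\overline W$. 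Indeed, $\mathcal I_C\cdot\mathcal O_{\widetilde{\overline W}}=\mathcal O(-E)$, so surjectivity of the evaluation map onto $\mathcal N\otimes\mathcal I_C$ pulls back to surjectivity onto $\mathcal Q$.

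Global generation of $\mathcal N\otimes\mathcal I_C$ is the main step, and we check it piece by piece. The ideal $\mathcal I_C$ is generated by two types of local equations.

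\emph{The fibre coordinate.} The equation of $D_0$ is the tautological section $t$ of $\mathcal O_{\overline W}(D_0)\simeq\mathcal O(D_\infty)\otimes\overline\pi^*\mathcal L^{-1}$. We use the sections $t\cdot\sigma$ with $\sigma\in H^0(\overline W,\mathcal O(D_\infty)\otimes\overline\pi^*(\mathcal M\otimes\mathcal L))$. Computing via $\overline\pi_*\mathcal O(D_\infty)=\mathcal O\oplus\mathcal L^{-1}$, this space contains $H^0(A,\mathcal M\otimes\mathcal L)\cdot s_\infty$. Here $\mathcal M\otimes\mathcal L$ is relatively globally generated once $\mathcal M$ is sufficiently ample, uniformly in the fibres by the theory of abelian varieties.

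\emph{The equation of $C$ in $A$.} The section $s_C\in H^0(A,\mathcal M)$ gives $\overline\pi^*s_C\cdot s_\infty^2\in H^0(\overline W,\mathcal N)$, which lies in $\mathcal I_C$.

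Away from $C$, the sections above already generate. Away from $D_0\cup D_\infty$, use $s_\infty^2\cdot\overline\pi^*(\text{sections of }\mathcal M)$ times $t$-powers; more carefully, use the surjection $H^0(\mathcal N\otimes\mathcal I_C)\to H^0(\mathcal N\otimes\mathcal I_C|_{\text{nbhd}})$ obtained from relative vanishing of $H^1$. I expect the cleanest route is to compute
\[
\overline\pi_*(\mathcal N\otimes\mathcal I_C)=\mathcal M\otimes\mathcal I_{C/A}\cdot(\ldots)\oplus\ldots
\]
summand by summand, exactly as in the proof of Lemma~\ref{lem:blowup-tensor-power-cohomology}. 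The pieces are $\mathcal M\otimes\mathcal L^{-1}\cdot$(shifts), $\mathcal O_A$, and $\mathcal M\otimes\mathcal L$-type bundles. One then checks that each is relatively globally generated with vanishing $R^1p_*$ for $\mathcal M$ ample enough, which gives surjectivity of global sections onto the fibres. The obstacle is that the degree-$0$ piece along $D_0$ is $\mathcal M\otimes\mathcal I_{C/A}\simeq\mathcal O_A$. This piece is generated by $s_C$ itself, so it is fine, but one must verify that no $H^1$ obstruction appears. That $H^1(A,\mathcal O_A)$ term is precisely why we need generation via explicit sections rather than a vanishing argument.

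Given semiampleness, consider the exceptional locus on a fibre over $s$. We have $\mathcal Q|_D\simeq\mathcal O_D$ by Lemma~\ref{lem:blowup-tensor-power-cohomology}, so $D_s$ is contracted. Conversely, we must show that $\mathcal Q_s$ is positive on every curve not contained in $D_s$. There are three cases.

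\emph{Curves in $E_s$.} Here $\mathcal Q|_E=b^*\mathcal N|_E\otimes\mathcal O_E(1)$, which is ample on the fibres of $E\to C$, and $b^*\mathcal N$ is ample on the base $C$. Positivity of $\mathcal Q$ on curves in $E_s$ not lying in $D_s\cap E_s$ follows by writing $\mathcal Q|_E$ as an extension-twist of $\mathcal O_{\mathbb P(N^\vee)}(1)$, with $N=\mathcal M|_C\oplus\mathcal L^{-1}|_C$ up to extension. The section $D\cap E$ corresponds to the quotient $N\to\mathcal M|_C$. On that section the degree of $\mathcal Q$ is $\deg(\mathcal N|_C)-\deg(\mathcal M|_C)=0$, consistent with the section lying in $D$. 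Every other curve has positive degree because $\mathcal L^{-1}\otimes\mathcal N$ is more positive than $\mathcal M$ on $C$, which follows from the $2D_\infty$ twist being trivial near $D_0$ together with $\mathcal M$ being ample.

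\emph{Curves in $D_{\infty,s}$, and curves meeting $W$ and not in $D\cup E$.} Here $\mathcal Q\cdot\Gamma=\mathcal N\cdot b(\Gamma)-E\cdot\Gamma$. This is positive because the explicit generating sections of the previous step separate points and tangents off $D\cup E$; equivalently, the sections $t\sigma$ and $\overline\pi^*s_C\, s_\infty^2$ define a morphism that is quasi-finite off $D_s$.

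Thus the exceptional locus of $\mathcal Q_s$ is exactly $D_s$.
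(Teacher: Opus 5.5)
Your route is genuinely different from the paper's. It can be made to work, but as written it has two real gaps.

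The paper never writes down sections. It works fibre by fibre: it shows $\mathcal Q_s$ is nef and big, identifies $\mathbb E(\mathcal Q_s)=D_s$ (including a separate bigness check on surfaces such as $E_s$), and applies Keel's criterion using $\mathcal Q|_{D_s}\simeq\mathcal O_{D_s}$. It then uses Cascini--Tanaka to pass from fibrewise to relative semiampleness. Both tools are positive-characteristic theorems, which is why \S\ref{sec:normal-contract} is stated in characteristic $p$. You instead prove global generation directly via $H^0(\widetilde{\overline W},\mathcal Q)=H^0(\overline W,\mathcal N\otimes\mathcal I_C)$. This avoids that input and gives $e=1$. Moreover, once $\mathcal Q_s$ is known to be semiample, positivity on every curve not in $D_s$ already forces $\mathbb E(\mathcal Q_s)=D_s$, so no bigness check on surfaces is needed. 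That is a real gain, provided the two steps below are repaired.

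\emph{Global generation.} The sections you list, $t\,s_\infty\overline\pi^*\tau$ with $\tau\in H^0(A,\mathcal M\otimes\mathcal L)$ and $\overline\pi^*s_C\,s_\infty^2$, all vanish along $D_\infty$. So ``away from $C$ the sections above already generate'' is false there. Your fallbacks do not repair this: $s_\infty^2\overline\pi^*\sigma$ lies in $\mathcal N\otimes\mathcal I_C$ only for $\sigma\in\mathcal O(S)s_C$. Also, there is no $H^1(A,\mathcal O_A)$ obstruction to worry about.

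The correct bookkeeping is $\overline\pi_*\mathcal O(D_\infty)\simeq\mathcal O_A\oplus\mathcal L$ and $\overline\pi_*(\mathcal N\otimes\mathcal I_C)\simeq\mathcal O_A\oplus(\mathcal M\otimes\mathcal L)\oplus(\mathcal M\otimes\mathcal L^{2})$. These summands are spanned by $s_C s_\infty^2$, by $t\,s_\infty\overline\pi^*\tau$, and by $t^2\overline\pi^*\tau'$ with $\tau'\in H^0(A,\mathcal M\otimes\mathcal L^{2})$. Then argue on $\widetilde{\overline W}$, viewing each as a section of $\mathcal Q$:
\begin{itemize}
\item $b^*(\overline\pi^*s_C\,s_\infty^2)$ has divisor $\widetilde F+2D_\infty$, where $\widetilde F$ is the strict transform of $\overline\pi^{-1}(C)$. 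Since $D_0$ and $\overline\pi^{-1}(C)$ meet transversally along $C$, $\widetilde F\cap D=\emptyset$, so this section generates $\mathcal Q$ off $\widetilde F\cup D_\infty$.
\item $t\,s_\infty\overline\pi^*\tau$ has divisor $D+D_\infty+b^*\overline\pi^{-1}(\tau=0)$, so these sections cover $\widetilde F-D_\infty$, including $\widetilde F\cap E$.
\item $t^2\overline\pi^*\tau'$ covers $D_\infty$.
\end{itemize}
So you need both $\mathcal M\otimes\mathcal L$ \emph{and} $\mathcal M\otimes\mathcal L^{2}$ to be fibrewise globally generated. This holds for $\mathcal M$ sufficiently ample, with base change coming from the vanishing of higher cohomology.

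\emph{Exceptional locus.} For curves meeting $W$ outside $D\cup E$, the expression $\mathcal N\cdot b_*\Gamma-E\cdot\Gamma$ is not visibly positive when $\Gamma$ meets $E$. Your assertion that the sections separate points and tangents off $D\cup E$ is unproved; it is essentially what has to be shown. The missing idea is the paper's rewriting. Since $b^*D_0=D+E$, one has $\mathcal Q\simeq b^*\mathcal N'\otimes\mathcal O(D)$ with $\mathcal N'=\mathcal N(-D_0)$ relatively ample. Hence $\mathcal Q\cdot\Gamma=\mathcal N'\cdot b_*\Gamma+D\cdot\Gamma>0$ for every curve $\Gamma\not\subset D_s$; the second term equals $1$ on fibres of $E_s\to C_s$.

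This also handles $E_s$ uniformly. Your treatment there mixes up sub and quotient: $\mathcal M|_C=N_{C/D_0}$ is a \emph{sub}bundle of $N_{C/\overline W}$, and $D\cap E$ is the section given by the quotient $N^\vee_{C/\overline W}\to\mathcal M^{-1}|_C$. The vague ``more positive than $\mathcal M$'' should be replaced by the identity $\mathcal Q|_{E_s}\simeq b^*((\mathcal L\otimes\mathcal M)|_{C_s})\otimes\mathcal O_{E_s}(D\cap E_s)$.
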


\begin{proof}
Fix a point $s\in S$. By \cite[Theorem~1.1]{CT20}, it is enough to show that $\mathcal Q_s$ is
semiample. We shall do so using Keel's criterion \cite[Theorem~0.2]{Kee99}: we
shall prove that $\mathcal Q_s$ is nef and big, identify its exceptional locus
$\mathbb E(\mathcal Q_s)$ with $D_s$, and prove that $\mathcal Q_s$ is semiample on
$\mathbb E(\mathcal Q_s)$.

To get started, we record two observations about $\mathcal{Q}$. First, it is trivial on $D$:
\begin{equation}\label{eq:blowup-Q-on-D}
\mathcal Q|_D\simeq
\mathcal N|_{D_0}\otimes\mathcal O_D(-E)
\simeq\mathcal M\otimes\mathcal O_A(-C)
\simeq\mathcal O_D.
\end{equation}
Secondly, we give another description of $\mathcal{Q}$. Consider the line bundle 
\[ \mathcal N':=\mathcal N(-D_0)
=\mathcal O_{\overline W}(2D_\infty-D_0)
\otimes\overline\pi^*\mathcal M \in \mathrm{Pic}(\overline{W}).\] 
It has degree $1$ on the fibres of
$\overline\pi$, so, after making $C$ sufficiently ample, it is $\overline q$-ample. Since
$D=b^*D_0-E$, we have
\begin{equation}\label{eq:blowup-Q-Nprime-D}
\mathcal Q = b^* \mathcal{N} \otimes \mathcal{O}_{\widetilde{\overline{W}}}(-E) \simeq
b^*\mathcal N'\otimes\mathcal O_{\widetilde{\overline W}}(D).
\end{equation}

We can now get started with checking Keel's criteria. First, we show $\mathcal{Q}_s$ is nef and big. In fact, bigness is clear from \eqref{eq:blowup-Q-Nprime-D}. For nefness, let $\Gamma$ be an integral curve  in $\widetilde{\overline{W}}_s$. If $\Gamma\subset D_s$, then
\eqref{eq:blowup-Q-on-D} gives $\mathcal{Q}_s \cdot \Gamma=0$, so $\mathcal{Q}_s$ is nef on curves in $D$. Moreover, if $\Gamma\not\subset D_s$, then we claim that the number 
\[ \mathcal Q_s\cdot \Gamma = (b^* \mathcal{N}')_s \cdot \Gamma + \mathcal{O}_{\widetilde{\overline{W}}}(D)_s \cdot \Gamma\] 
is actually positive. Indeed, both terms are non-negative as the corresponding line bundles are effective on $\Gamma$ by the assumption $\Gamma \not\subset D_s$. The first summand is positive if $b_s(\Gamma)$ is a curve (since $\mathcal{N}'$ is ample), and the second summand is positive if $b_s(\Gamma)$ is a point (since then $\Gamma$ is a fibre of $E_s \to C_s$ and $D \cap E_s$ is a section of this map), so the sum is always positive.  This proves that $\mathcal{Q}_s$ is nef and in fact positive on curves not in $D_s$.

Next, we show that $\mathbb E(\mathcal Q_s)=D_s$.
We have just seen that the curves on which $\mathcal Q_s$ has degree zero lie in $D_s$. If $Y \subset \widetilde{\overline{W}}_s$ is an integral surface with $Y\not\subset D_s$, then \eqref{eq:blowup-Q-Nprime-D} shows that $\mathcal{Q}_s|_Y$ is big provided $b_s(Y)$ is a surface. It remains to consider $Y=E_s$, i.e., to show that $\mathcal{Q}_s|_{E_s}$ is big. Since $D_\infty \cap D_0 = \emptyset$, we have $\mathcal N_s|_{C_s}\simeq\mathcal M_s|_{C_s}$, and hence
\[
\mathcal Q_s|_{E_s}\simeq
b_s^*(\mathcal M_s|_{C_s})
\otimes\mathcal O_{\mathbb P_{C_s}(N_{C_s/\overline W_s}^{\vee})}(1)
\simeq\mathcal O_{\mathbb P_{C_s}(\mathcal V_s)}(1),
\qquad
\mathcal V_s:=N_{C_s/\overline W_s}^{\vee}\otimes\mathcal M_s|_{C_s}.
\]
The dual normal sequence for
$C_s\hookrightarrow D_s\hookrightarrow\overline W_s$ is
\[
0\longrightarrow
\mathcal L_s|_{C_s}
\longrightarrow N_{C_s/\overline W_s}^{\vee}
\longrightarrow\mathcal M_s^{-1}|_{C_s}
\longrightarrow0.
\]
Tensoring it with $\mathcal M_s|_{C_s}$ gives
\[
0\longrightarrow
(\mathcal L_s\otimes\mathcal M_s)|_{C_s}
\longrightarrow\mathcal V_s
\longrightarrow\mathcal O_{C_s}
\longrightarrow0.
\]
As the first term above is an ample line bundle, a simple analysis using the above sequence shows that $\mathcal V_s$ is a nef vector bundle, i.e., that $\mathcal O_{\mathbb P_{C_s}(\mathcal V_s)}(1)$ is nef. Moreover, by \cite[\href{https://stacks.math.columbia.edu/tag/02U0}{Tags~02U0} and \href{https://stacks.math.columbia.edu/tag/0ERU}{0ERU}]{stacks-project}, the self-intersection number $c_1(\mathcal O_{\mathbb P_{C_s}(\mathcal V_s)}(1))^2$ is given by $\deg(\mathcal V_s)=\deg(\mathcal M_s|_{C_s})>0$, so we get bigness of $\mathcal{Q}_s|_{E_s}$; alternately, one can estimate $H^0(C_s,\mathrm{Sym}^n(\mathcal{V}_s))$ directly using the above sequence to prove bigness.

It remains to show that $\mathcal{Q}|_{D_s}$ is semiample on $D_s$. But $\mathcal Q|_{D_s}$ is trivial by \eqref{eq:blowup-Q-on-D}, so Keel's criterion applies.
\end{proof}

By the previous lemma, we can choose $e>0$ such that $\mathcal Q^{\otimes e}$ is relatively globally generated. Its associated morphism $\varphi$ can be Stein factored as 
\[
\varphi:\widetilde{\overline W}\xrightarrow{\rho}X
\xrightarrow{g}
\mathbb P_S(\widetilde q_*\mathcal Q^{\otimes e}),
\]
so $X$ is normal, the line bundle $\mathcal H:=
g^*\mathcal O_{\mathbb P_S(\widetilde q_*\mathcal Q^{\otimes e})}(1)$
is ample, and $\rho^*\mathcal H\simeq\mathcal Q^{\otimes e}$. As $\mathbb{E}(\mathcal Q_s) = D_s$ for all $s \in S$ and $\mathcal{Q}|_D \simeq \mathcal{O}_D$, the proper birational map $\rho$ collapses $D$ to an $S$-point $i:S \to X$, does not send any other point of $\widetilde{\overline{W}}$ to $i(S)$, and the induced map $\widetilde{\overline{W}} - D \to X - i(S)$ is an isomorphism. Thus, we obtain an abstract blowup square
\begin{equation}\label{eq:blowup-contraction-square}
\begin{tikzcd}[column sep=large]
D\simeq A \arrow[r,hook] \arrow[d,"p"'] &
\widetilde{\overline W} \arrow[d,"\rho"] \\
S \arrow[r,hook,"i"'] & X
\end{tikzcd}
\end{equation}
expressing that $X$ is obtained by contracting $D \subset \widetilde{\overline{W}}$. The following abstract lemma gives a mechanism to compute cohomology of any open subset of $X$ containing $i(S)$.

\begin{lemma}[Contracting an abelian variety]\label{lem:abelian-contraction}
Fix a noetherian base ring $k$.  Let $f:Y\to U$ be a proper birational morphism of normal $k$-schemes, and let
$u:\mathrm{Spec}(k) \to U$ be a $k$-point. Assume $f$
is an isomorphism away from $u$ and that
$D=f^{-1}(u)_{\mathrm{red}}$ is an effective Cartier divisor identified
with an abelian scheme $A/k$. If
$\mathcal N_D:=\mathcal O_D(-D)$ is ample, then the natural square
\[
\begin{tikzcd}[column sep=large]
\mathcal O_U \arrow[r] \arrow[d] &
Rf_*\mathcal O_Y \arrow[d] \\
u_*\mathcal O_{\mathrm{Spec}(k)} \arrow[r] &
Rf_* \mathcal{O}_A
\end{tikzcd}
\]
is a pullback square in $D(U)$. Consequently, we have a fibre sequence 
\[ \mathcal{O}_U \to Rf_* \mathcal{O}_Y \to  \tau^{>0} Rf_* \mathcal{O}_A.\]
\end{lemma}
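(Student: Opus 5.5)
Since $f$ is an isomorphism away from $u$, the cones of both horizontal maps are supported at $u$. Being a pullback square is equivalent to the induced map on horizontal cofibres being an isomorphism. The top cofibre is $\mathrm{cofib}(\mathcal O_U\to Rf_*\mathcal O_Y)$, and the bottom one is $\mathrm{cofib}(k\to R\Gamma(A,\mathcal O_A))=\tau^{>0}R\Gamma(A,\mathcal O_A)$, because $H^0(A,\mathcal O_A)=k$ for an abelian scheme over $k$. It therefore suffices to show two things. First, $\mathcal O_U\to f_*\mathcal O_Y$ is an isomorphism; this holds since $U$ is normal and $f$ is proper birational, so $f_*\mathcal O_Y=\mathcal O_U$ by Zariski's main theorem. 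Second, restriction induces an isomorphism $R^if_*\mathcal O_Y\to R^if_*\mathcal O_A=H^i(A,\mathcal O_A)$ for all $i>0$. Granting these, the fibre sequence is immediate, since the cofibre of the left column is then identified with $\tau^{>0}Rf_*\mathcal O_A$.

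The question is local on $U$ near $u$, so we may assume $U=\mathrm{Spec}(R_0)$ is affine. For $i>0$, the sheaf $R^if_*\mathcal O_Y$ is supported at $u$, so it is a finitely generated module annihilated by a power of the ideal $\mathfrak m_u$. The theorem on formal functions then gives
\[
R^if_*\mathcal O_Y\simeq \varprojlim_n H^i(Y,\mathcal O_{nD}),
\]
where $nD$ is the $n$-th thickening. This is legitimate because the ideals $\mathcal O_Y(-nD)$ and $\mathfrak m_u^n\mathcal O_Y$ are cofinal, as $D=f^{-1}(u)_{\mathrm{red}}$. It therefore suffices to prove that every transition map $H^i(\mathcal O_{(n+1)D})\to H^i(\mathcal O_{nD})$ is an isomorphism for $i>0$ and $n\ge 1$. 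Then the inverse system is constant and equal to $H^i(A,\mathcal O_A)$, and the composite of the identification with restriction to $D$ is the map we want.

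For the transition maps, use the exact sequence
\[
0\to \mathcal O_D(-nD)=\mathcal N_D^{\otimes n}\to \mathcal O_{(n+1)D}\to\mathcal O_{nD}\to 0,
\]
which is exact because $D$ is Cartier. Since $\mathcal N_D$ is ample on the abelian scheme $A/k$, all higher cohomology of $\mathcal N_D^{\otimes n}$ vanishes for $n\ge1$. This is the vanishing theorem for ample line bundles on abelian varieties [MumfordAV, p.~150], applied fibrewise and combined with cohomology and base change over the noetherian base $k$, exactly as in the proof of Theorem~\ref{thm:normal-five-quotient}. The long exact sequence then shows that $H^i(\mathcal O_{(n+1)D})\to H^i(\mathcal O_{nD})$ is an isomorphism for all $i\ge 2$ and injective for $i=1$. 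Surjectivity for $i=1$ also follows, since the next term $H^1(\mathcal N_D^{\otimes n})$ vanishes. So the transition maps are isomorphisms for every $i\ge1$, with no separate $H^0$ analysis required. (On $H^0$ the inverse limit is the completion of $f_*\mathcal O_Y$, which is consistent with the first step.)

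The main obstacle is making the formal-functions step rigorous: showing that $R^if_*\mathcal O_Y$ is already $\mathfrak m_u$-adically complete and equal to the limit, and justifying the cofinality of the two systems of thickenings. For this I would use that the module is finite length, hence equal to its own completion. A secondary point is the relative ampleness vanishing over an arbitrary noetherian $k$. If fibrewise base change is awkward, the fallback is to note that $\mathcal N_D^{\otimes n}$ has the form "ample $\otimes$ Pic$^0$", as in Lemma~\ref{lem:blowup-tensor-power-cohomology}.
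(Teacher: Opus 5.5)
Your proposal is correct and follows the paper's own (sketched) argument, with the details filled in. The steps are the same: $f_*\mathcal O_Y=\mathcal O_U$ by normality and $H^0(A,\mathcal O_A)=k$; then the theorem on formal functions, combined with the vanishing of $H^{>0}(A,\mathcal N_D^{\otimes n})$ for $n\geq 1$, makes the system $H^i(\mathcal O_{nD})$ constant for $i\geq1$.

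One correction: $k$ need not be a field. In the paper's application $k=\mathcal O(S)$ and $u$ is the section $i\colon S\to X$, so $R^if_*\mathcal O_Y\simeq H^i(A,\mathcal O_A)$ is not of finite length. Its completeness should instead come from its being coherent and killed by a power of the ideal of $u(\operatorname{Spec} k)$, which you already noted earlier. Also, surjectivity on $H^1$ uses $H^2(A,\mathcal N_D^{\otimes n})=0$, not $H^1$.
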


\begin{proof}
We have $\mathcal O_U\simeq f_*\mathcal O_Y$, since $f$ is proper
birational and $U$ is normal, and
$k=H^0(A,\mathcal{O}_A)$  since $A$ is an abelian
scheme. It is therefore enough to show that the natural map
\[
R^{>0}f_*\mathcal O_Y\longrightarrow R^{>0}f_*\mathcal O_A
\]
is an isomorphism. Since $f$ is an isomorphism outside $u$, this follows
from the theorem on formal functions, the ampleness assumption on the
conormal bundle, and the fact that ample line bundles on abelian varieties
have no higher cohomology.
\end{proof}

\subsection{Cohomology after contraction}\label{subsec:cohomology-after-contraction}

In this subsection, we combine the calculations of \S\ref{subsec:blowup-cohomology} with the contraction from \S\ref{subsec:semiample-contraction} to compute the cohomology needed for local cohomology.

As both the modifications $b:\widetilde{\overline{W}} \to \overline{W}$ and $\rho:\widetilde{\overline{W}} \to X$ are happening away from $D_\infty \subset \overline{W}$, we can view $D_\infty$ as a Cartier divisor in any of those schemes. Pulling back
\eqref{eq:blowup-contraction-square} along $X-D_\infty\hookrightarrow X$ gives
\begin{equation}\label{eq:blowup-contraction-open-square}
\begin{tikzcd}[column sep=large]
D\simeq A \arrow[r,hook] \arrow[d,"p"'] &
\widetilde W=\widetilde{\overline W}-D_\infty
=\operatorname{Bl}_C(W) \arrow[d,"\rho_U"] \\
S \arrow[r,hook,"i"'] & U:=X-D_\infty.
\end{tikzcd}
\end{equation}

As $\widetilde{W} \to W$ is the blowup of $W$ along the codimension $2$ subscheme $C$ that is an ample divisor in $D_0 \subset W$,  the conormal bundle of the strict transform $D \subset \widetilde{W}$ of $D_0$ is given by twisting the conormal bundle of $D_0 \subset W$ up by $\mathcal{O}_A(C) = \mathcal{M}$, i.e., it is $\mathcal{L} \otimes \mathcal{M}$, which is $p$-ample.  Thus Lemma~\ref{lem:abelian-contraction} applies to
square~\eqref{eq:blowup-contraction-open-square}. Using this as well as Lemma~\ref{lem:blowup-tensor-power-cohomology}, we shall show:

\begin{lemma}[The cohomology of a contraction]
\label{lem:blowup-contraction-cohomology}
For every $m\in\mathbb Z$,
\[
H^1(U,\mathcal H^{\otimes m}|_U)\simeq
\begin{cases}
\displaystyle\bigoplus_{r>0}H^1(A,\mathcal L^{\otimes r}),&m=0,\\[6pt]
0,&m\neq0.
\end{cases}
\]
\end{lemma}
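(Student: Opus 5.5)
We compute $H^1(U,\mathcal H^{\otimes m}|_U)$ by applying Lemma~\ref{lem:abelian-contraction} to the square \eqref{eq:blowup-contraction-open-square} and then using Lemma~\ref{lem:blowup-tensor-power-cohomology}.

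\textbf{Step 1: a fibre sequence on $U$.} Lemma~\ref{lem:abelian-contraction}, applied relative to $S$ (i.e.\ with $u$ replaced by the section $i$; the proof via formal functions works verbatim over the Dedekind base $S$, using relative ampleness of $\mathcal L\otimes\mathcal M$ and flat base change), gives a fibre sequence
\[
\mathcal O_U\to R\rho_{U*}\mathcal O_{\widetilde W}\to \tau^{>0}R\rho_{U*}\mathcal O_D .
\]
Since $\rho^*\mathcal H\simeq\mathcal Q^{\otimes e}$ and $\mathcal Q|_{\widetilde W}\simeq\mathcal G$ by \eqref{eq:blowup-G}, tensoring with $\mathcal H^{\otimes m}|_U$ and using the projection formula yields
\[
\mathcal H^{\otimes m}|_U\to R\rho_{U*}\mathcal G^{\otimes em}\to \tau^{>0}R\rho_{U*}\mathcal O_D\otimes\mathcal H^{\otimes m}.
\]
The third term is supported on $i(S)$. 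There $\mathcal H^{\otimes m}$ restricts to a line bundle on $S$, which is trivial because $\mathcal O(S)$ is a PID. Hence the third term is $i_*\tau^{>0}Rp_*\mathcal O_A$, and it is independent of $m$.

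\textbf{Step 2: take global sections.} Since $A$ is an abelian surface, $R\Gamma$ of the third term is $H^1(A,\mathcal O_A)[-1]\oplus H^2(A,\mathcal O_A)[-2]$, because $S$ is affine. The resulting long exact sequence reads
\[
H^0(\widetilde W,\mathcal G^{\otimes em})\to H^0(A,\mathcal O_A)\xrightarrow{0} H^1(U,\mathcal H^{\otimes m})\to H^1(\widetilde W,\mathcal G^{\otimes em})\xrightarrow{\ \mathrm{res}\ } H^1(A,\mathcal O_A).
\]
The first map of the original sequence hits only $H^{>0}$, because of the truncation $\tau^{>0}$. Therefore
\[
H^1(U,\mathcal H^{\otimes m}|_U)\simeq\ker\bigl(H^1(\widetilde W,\mathcal G^{\otimes em})\to H^1(A,\mathcal O_A)\bigr).
\]
One must check that this connecting map is the restriction map to $D$ studied in Lemma~\ref{lem:blowup-tensor-power-cohomology}, twisted by the trivialization $\mathcal G|_D\simeq\mathcal O_D$. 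This follows from naturality of the square in Lemma~\ref{lem:abelian-contraction}.

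\textbf{Step 3: read off the answer from Lemma~\ref{lem:blowup-tensor-power-cohomology}.}
\begin{itemize}
\item For $m>0$, the restriction is an isomorphism, so the kernel vanishes.
\item For $m=0$, the restriction is projection onto the $r=0$ summand, so the kernel is $\bigoplus_{r>0}H^1(A,\mathcal L^{\otimes r})$.
\item For $m<0$, the source $H^1(\widetilde W,\mathcal G^{\otimes em})$ is already $0$.
\end{itemize}

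\textbf{Main obstacle.} The delicate point is Step 1–2: justifying the relative version of Lemma~\ref{lem:abelian-contraction} over $S$, and identifying the boundary map with restriction compatibly with the twist by $\mathcal H^{\otimes m}$. I would handle this by working directly with the pullback square in Lemma~\ref{lem:abelian-contraction} tensored with $\mathcal H^{\otimes m}$, rather than with the fibre sequence. The square's right vertical arrow is literally restriction to $D$, so after applying $R\Gamma$ the identification becomes formal. It then remains only to note that $H^0(S,\mathcal O_S)\to H^0(A,\mathcal O_A)$ is an isomorphism.
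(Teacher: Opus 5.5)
Your proposal is correct and follows essentially the same route as the paper. You apply Lemma~\ref{lem:abelian-contraction} to the square~\eqref{eq:blowup-contraction-open-square}; that lemma is already stated over an arbitrary noetherian base ring $k$ with a $k$-point, so taking $k=\mathcal O(S)$ needs no separate relative version. You then twist by $\mathcal H^{\otimes m}$ via the projection formula and triviality of $\mathcal H$ on $S$ and $D$, take $R\Gamma$ to identify $H^1(U,\mathcal H^{\otimes m}|_U)$ with the kernel of restriction to $D$, and read off the answer from Lemma~\ref{lem:blowup-tensor-power-cohomology}.

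The differences are minor. You get triviality of $i^*\mathcal H$ from $\operatorname{Pic}(S)=0$ for the PID $\mathcal O(S)$, whereas the paper pulls back the trivialization on $D$ along the identity section of $A\to S$, which would also work over a Dedekind base. In your long exact sequence, the term $H^0(A,\mathcal O_A)$ should be $H^0$ of $\tau^{>0}R\Gamma(A,\mathcal O_A)$, i.e.\ zero; this does not affect the conclusion.
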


\begin{proof}
We shall compute the cohomology by applying
Lemma~\ref{lem:abelian-contraction} to the pullback square
\eqref{eq:blowup-contraction-open-square}. For this, we first compute the
pullback of $\mathcal H$ to the other three terms of the square.

The pullbacks of $\mathcal H$ to $S$ and $A\simeq D$ are trivial. Indeed,
using the commutativity of the square and the identity section for $A \to S$, it suffices to prove triviality of the pullback to $D \simeq A$. But $\rho_U^* \mathcal{H} = \mathcal{Q}|_{\widetilde{W}}^{\otimes e}$ by design, so the claim follows from the triviality
\eqref{eq:blowup-Q-on-D} of $\mathcal Q$ on $D$. 

On the other hand, on $\widetilde W$, the identity $\rho^*\mathcal H\simeq\mathcal Q^{\otimes e}$
and \eqref{eq:blowup-G} give
\[
\rho_U^*(\mathcal H^{\otimes m}|_U)
\simeq(\mathcal Q|_{\widetilde W})^{\otimes em}
\simeq\mathcal G^{\otimes em}.
\]
Note that the cohomology of this line bundle, as well as the restriction map from its
first cohomology to $H^1(D,\mathcal O_D)$ (coming from the previous paragraph), was computed in
Lemma~\ref{lem:blowup-tensor-power-cohomology}.

Because $S$ is affine and $p_*\mathcal O_A\simeq\mathcal O_S$, the cofiber of
$R\Gamma(S,\mathcal O_S)\to R\Gamma(A,\mathcal O_A)$ is
$\tau^{>0}R\Gamma(A,\mathcal O_A)$. Thus, by the projection formula and
Lemma~\ref{lem:abelian-contraction}, applying derived global sections to
square~\eqref{eq:blowup-contraction-open-square} gives a fibre sequence
\[
R\Gamma(U,\mathcal H^{\otimes m}|_U)\longrightarrow
R\Gamma(\widetilde W,\mathcal G^{\otimes em})\longrightarrow
\tau^{>0}R\Gamma(A,\mathcal O_A).
\]
The associated long exact sequence gives
\[
H^1(U,\mathcal H^{\otimes m}|_U)
\simeq\ker\!\left(
H^1(\widetilde W,\mathcal G^{\otimes em})
\longrightarrow H^1(A,\mathcal O_A)
\right),
\]
where the map is restriction to $D$. Since $e>0$, the computation and the
description of the restriction map in
Lemma~\ref{lem:blowup-tensor-power-cohomology} identify this kernel with
$\bigoplus_{r>0}H^1(A,\mathcal L^{\otimes r})$ when $m=0$, and with zero
when $m\neq0$.
\end{proof}

\subsection{Proof of the theorem}\label{subsec:proof-normal-five-blowups}

In this subsection, we put together \S\S\ref{subsec:blowup-cohomology}--\ref{subsec:cohomology-after-contraction} to prove Theorem~\ref{thm:normal-five-blowups-charp}.

\begin{proof}[Proof of Theorem~\ref{thm:normal-five-blowups-charp}]
Let
\[
T=\bigoplus_{n\geq0}H^0(X,\mathcal H^{\otimes n}),
\]
and let $J\subset T$ be the homogeneous ideal cutting out
$D_\infty\subset X$. The scheme $X$ is normal and integral, and
$\mathcal H$ is ample, so $T$ is a finitely generated normal $k$-domain. It
is flat over $k$. Since $\dim(X)=4$, we have $\dim(T)=5$. As in the proof of
Theorem~\ref{thm:normal-five-quotient}, we learn that $J$ is prime and $\dim(T/J)=4$, so $J$ has codimension $1$.

Lemmas~\ref{lem:cone-cohomology}
and~\ref{lem:blowup-contraction-cohomology} give
\[
H^2_J(T)\simeq
\bigoplus_{m\in\mathbb Z}H^1(U,\mathcal H^{\otimes m}|_U)
\simeq\bigoplus_{r>0}H^1(A,\mathcal L^{\otimes r}).
\]
This module is concentrated in degree $m=0$, and is therefore annihilated
by $T_+$. Moreover,
\[
T_0=H^0(X,\mathcal O_X)
=H^0(\widetilde{\overline W},\mathcal O_{\widetilde{\overline W}})
=\mathcal O(S).
\]
The argument at the end of the proof of Theorem~\ref{thm:normal-five-quotient}
therefore applies verbatim: the right-hand side has an infinite set of closed
points of $S$ as its minimal associated primes. Hence so does $H^2_J(T)$.
\end{proof}

\begin{remark}
The blow-up-then-contract operation used above to construct $X$ was also used in our previous work with Hochster \cite[Example~11.7 and Lemma~11.8]{BHM26}.
\end{remark}

\section{A normal local example via homogenization}

To pass from the affine example to a local one (i.e., from part (1) to part (2) in Theorem~\ref{main-theorem}) we use the affine cone construction:

\begin{lemma}[Global to local]
\label{lem:homogenization}
Let $A$ be a $d$-dimensional finitely generated normal $k$-domain, let
$J\subset A$ be a codimension $1$ prime ideal, and let $i\geq 0$. Suppose
that $H^i_J(A)$ has infinitely many minimal primes. Then there is a
$(d+1)$-dimensional normal local domain $A_{\mathrm{loc}}$, essentially of
finite type over $k$, and a codimension $1$ prime ideal
$J_{\mathrm{loc}}\subset A_{\mathrm{loc}}$ such that
\[
H^i_{J_{\mathrm{loc}}}(A_{\mathrm{loc}})
\]
has infinitely many minimal primes.
\end{lemma}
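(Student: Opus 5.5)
Let $A$ be generated over $k$ by $a_1,\dots,a_n$, and consider the standard homogenization. Write $A = k[x_1,\dots,x_n]/P$ with $P$ prime. Let $B := k[x_0,x_1,\dots,x_n]/P^h$ be the projective closure's coordinate ring, where $P^h$ is the homogenization of $P$. Then $B$ is a graded domain of dimension $d+1$ with $B[x_0^{-1}]_0 \simeq A$ and $B[x_0^{-1}] \simeq A[x_0^{\pm1}]$. Normality of $B$ itself may fail at the irrelevant ideal and at $V(x_0)$. To avoid this, I will replace $B$ by a more robust cone.

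Choose a projective normal compactification $\overline{X}$ of $\mathrm{Spec}(A)$ over $k$; normalizing the projective closure suffices. Take an ample line bundle $\mathcal O(1)$ on $\overline X$ such that the section ring
\[ C := \bigoplus_{n\geq 0} H^0(\overline X,\mathcal O(n)) \]
is normal; this is automatic for section rings of normal integral projective schemes. Then $\dim C = d+1$. Set $A_{\mathrm{loc}} := C_{\mathfrak m}$ with $\mathfrak m = C_+ + \mathfrak{n}$ for a suitable maximal ideal $\mathfrak n$ of $C_0$, or simply $C_{C_+}$ when $C_0=k$ is a field. Let $J_{\mathrm{loc}}$ be generated by the homogeneous prime $\mathcal J \subset C$ of sections vanishing on the closure $\overline Z$ of $V(J)$; it has codimension $1$.

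The core comparison is as follows. Let $Y = \mathcal O(1)^\times$ be the punctured cone, so that $Y \to \overline X$ is a $\mathbb G_m$-torsor and $Y = \mathrm{Spec}(C) - V(C_+)$. Over the open $\mathrm{Spec}(A) \subset \overline X$ the torsor is trivial after shrinking, or at least Zariski-locally trivial. Hence the restriction of $H^i_{\mathcal J}(C)$ to the preimage $Y_A$ of $\mathrm{Spec}(A)$ is $\bigoplus_n H^i_J(\mathcal O(n)|_{\mathrm{Spec} A})$, which contains $H^i_J(A)$ as a summand locally. Concretely, I will pick $f \in C_1$ with $D_+(f) \subset \mathrm{Spec}(A)$, or use homogenization by $x_0$ with $C[f^{-1}] \simeq A'[f^{\pm1}]$ for $A'=C_{(f)}$. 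The issue is that $D_+(f)$ may be smaller than $\mathrm{Spec}(A)$. I handle this by noting that the infinitely many minimal primes of $H^i_J(A)$ can be covered by finitely many affine opens $D_+(f_j)$, so some single $D_+(f)\cap \mathrm{Spec}A$ contains infinitely many of them. Localization preserves minimality of primes in the support, so $H^i_{J}(A_f)$ still has infinitely many minimal primes. Then
\[ H^i_{\mathcal J}(C)_f \simeq H^i_{J}(A_f)\otimes_{A_f} A_f[f^{\pm1}], \]
and by faithful flatness of $A_f \to A_f[t^{\pm1}]$ the minimal primes $\mathfrak p$ pull back to distinct minimal primes $\mathfrak pA_f[t^{\pm 1}]$. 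These correspond to infinitely many minimal primes of $\operatorname{Supp} H^i_{\mathcal J}(C)$ not containing $f$.

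The final and hardest step is passing to the local ring at the vertex, where I need these minimal primes to survive. Each such prime $\mathfrak Q$ is homogeneous, being the extension of a prime under a graded torsor, so $\mathfrak Q \subset C_+ + \mathfrak n$ whenever $\mathfrak Q\cap C_0\subset\mathfrak n$. If $C_0=k$ is a field, every homogeneous prime lies in the unique homogeneous maximal ideal $\mathfrak m$, so localizing at $\mathfrak m$ keeps all of them minimal in $\operatorname{Supp} H^i_{J_{\mathrm{loc}}}(A_{\mathrm{loc}}) = \operatorname{Supp}(H^i_{\mathcal J}(C))_{\mathfrak m}$. To guarantee $C_0 = k$, I use that $k$ is a field in part (2) and that $\overline X$ is geometrically connected after replacing $k$ by $H^0(\overline X,\mathcal O)$, a finite field extension, over which everything is still essentially of finite type. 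Normality of $C_{\mathfrak m}$ and the codimension count $\dim A_{\mathrm{loc}} = d+1$, with $J_{\mathrm{loc}}$ of height one, then follow from the graded structure.
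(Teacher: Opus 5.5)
Your argument is correct and is essentially the paper's proof. Both take the full section ring of an ample line bundle on a normal projective compactification, transport the infinitely many minimal primes to the cone via the flat map $A'\to A'[t^{\pm1}]$ on the open part, and localize at the vertex. Your homogeneity argument (the primes are graded, and $C_0=H^0(\overline X,\mathcal O)$ is automatically a field, so they lie in $C_+$) is precisely the justification behind the paper's remark that their closures contain $v$.

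The pigeonhole over finitely many $D_+(f_j)$ is unnecessary; as written it would also need the $f_j$ to have a common degree, e.g.\ after passing to a Veronese. Pulling back $x_0$ to the normalized projective closure already gives a degree-one section whose nonvanishing locus is exactly $\operatorname{Spec}(A)$, so $C_f\simeq A[f^{\pm1}]$, as in the paper.
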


\begin{proof}
Write $X=\operatorname{Spec}(A)$ and choose a closed immersion
$X\hookrightarrow\mathbb A^N_k$. Let
$\overline X\subset\mathbb P^N_k$ be its projective closure. At the expense
of changing the initial closed immersion, we may assume that $\overline X$
is normal. Let $\mathcal L=\mathcal O_{\overline X}(1)$ and let
$s\in H^0(\overline X,\mathcal L)$ be the coordinate defining the hyperplane
at infinity. Thus $\mathcal L$ is ample and $X=\overline X_s$. Let
\[
B:=\bigoplus_{n\geq0}H^0(\overline X,\mathcal L^{\otimes n}),
\qquad
C:=\operatorname{Spec}(B),
\]
and let $v=V(B_+)\in C$ be the vertex. Since $\overline X$ is normal and
integral, its full section ring $B$ is a finitely generated normal domain of
dimension $d+1$. Moreover, $U:=D(s)\subset C$ is isomorphic to
$X\times\mathbb G_m$, because $(B_s)_0=\mathcal O(X)=A$ and the invertible
degree-$1$ element $s$ identifies $B_s$ with $A[s,s^{-1}]$.

Let $Z\subset X$ be the closed subscheme defined by $J$, and let
$Z_C\subset C$ be the scheme-theoretic closure of the inverse image of $Z$
in $U\simeq X\times\mathbb G_m$. Let $J_C\subset\mathcal O(C)$ be the ideal
of $Z_C$, and set
\[
A_{\mathrm{loc}}=\mathcal O(C)_v,
\qquad
J_{\mathrm{loc}}=(J_C)_v.
\]
The scheme $Z_C$ is integral of dimension $d$, and its homogeneous prime
ideal $J_C$ is contained in the vertex $v$. It follows that
$A_{\mathrm{loc}}$ is a $(d+1)$-dimensional normal local domain and
$J_{\mathrm{loc}}$ is a codimension $1$ prime ideal.

Let $M=H^i_J(A)$ and $N=H^i_{J_C}(B)$. As the projection
$f:U\simeq X\times\mathbb G_m\to X$ is flat, we have
$N|_U\simeq f^*M$. Hence
\[
\operatorname{Supp}N\cap U=f^{-1}(\operatorname{Supp}M).
\]
For each minimal point $\xi$ of $\operatorname{Supp}M$, the generic point
$\eta_\xi$ of the fibre
$f^{-1}(\xi)\simeq\mathbb G_m\times\operatorname{Spec}(\kappa(\xi))$ is
minimal in $\operatorname{Supp}N\cap U$. It is also minimal in
$\operatorname{Supp}N$: any point of $\operatorname{Supp}N$ specializing to
$\eta_\xi$ must lie in $U$, since the complement $C-U$ is closed. Since
supports are stable under specialization, the closure of $\eta_\xi$ in $C$
is contained in $\operatorname{Supp}N$. This closure contains the vertex
$v$, so the distinct points $\eta_\xi$ remain distinct minimal points after
localizing at $v$. Thus
$H^i_{J_{\mathrm{loc}}}(A_{\mathrm{loc}})$ has infinitely many minimal
primes.
\end{proof}

\begin{corollary}\label{cor:normal-local-six}
Let $k$ be a field. There is a normal local domain $R$ of dimension $6$,
essentially of finite type over $k$, and a codimension $1$ prime ideal
$I\subset R$ such that $H^2_I(R)$ has infinitely many minimal primes.
\end{corollary}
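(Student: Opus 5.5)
The plan is to feed the $5$-dimensional affine examples of Theorem~\ref{main-theorem}~(1) into Lemma~\ref{lem:homogenization}. Fix a field $k$. If $\operatorname{char}(k)\neq 2$, we take the finitely generated normal $k$-domain $T$ of dimension $5$ and the codimension $1$ prime $J\subset T$ from Theorem~\ref{thm:normal-five-quotient}. If $\operatorname{char}(k)=p>0$, we instead use Theorem~\ref{thm:normal-five-blowups-charp}. Every field falls into at least one of these cases, and in each case $H^2_J(T)$ has infinitely many minimal primes.

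We then apply Lemma~\ref{lem:homogenization} with $A=T$, $d=5$ and $i=2$. Its hypotheses are exactly what the theorems provide: $T$ is a $5$-dimensional finitely generated normal $k$-domain, $J$ is a codimension $1$ prime, and $H^2_J(T)$ has infinitely many minimal primes. The lemma produces a normal local domain $R:=A_{\mathrm{loc}}$ of dimension $d+1=6$, essentially of finite type over $k$, together with a codimension $1$ prime $I:=J_{\mathrm{loc}}$. By the lemma, $H^2_I(R)$ has infinitely many minimal primes, which is the claim.

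This argument has no genuine obstacle, since all the work is in the two theorems and the homogenization lemma. The only point to check is that the lemma's input hypotheses are actually met. In the proof of Lemma~\ref{lem:homogenization}, the step ``we may assume $\overline X$ is normal'' can be arranged in two ways: replace $\overline X$ by its normalization (finite over $\overline X$, hence still projective), or take a very ample embedding of a normal projective compactification. For the graded $T$ constructed in the theorems, there is a more direct route: $T$ is already the section ring of an ample bundle on a normal projective $X$, so one can work with the cone over $X$ directly.
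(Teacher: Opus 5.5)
Your argument is correct and is exactly the paper's proof: apply Lemma~\ref{lem:homogenization} with $d=5$ and $i=2$ to the example of Theorem~\ref{thm:normal-five-quotient} when $\operatorname{char}(k)\neq 2$, and to that of Theorem~\ref{thm:normal-five-blowups-charp} otherwise (your overlapping case split is harmless). One caution about your closing aside, which the argument does not use: the $X$ in those theorems is projective only over the non-proper base $S$, and its cone is $\operatorname{Spec}(T)$ itself, where the minimal primes of $H^2_J(T)$ are pairwise distinct maximal ideals lying over closed points of $S$, so no localization of $T$ sees more than one of them --- you genuinely need a compactification of $\operatorname{Spec}(T)$ over $k$, as in the lemma.
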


\begin{proof}
Apply Lemma~\ref{lem:homogenization} with $d=5$ and $i=2$ to the normal
domain and prime ideal constructed in
Theorem~\ref{thm:normal-five-quotient} if $\operatorname{char}(k)\neq2$, and
in Theorem~\ref{thm:normal-five-blowups-charp} if
$\operatorname{char}(k)=2$.
\end{proof}

\section{A \texorpdfstring{$5$}{5}-dimensional non-reduced example over
\texorpdfstring{$\mathbb Z$}{Z}}

In this section we give a completely explicit finite type algebra over
$\mathbb Z$ whose second local cohomology has infinitely many minimal
primes.

\begin{proposition}\label{prop:explicit-p-torsion}
Consider the finite type $\mathbb Z$-algebra
\[
R=\frac{\mathbb Z[x_0,x_1,x_2,u_0,u_1,u_2]}
{(x_0u_1-x_1u_0,\ x_0u_2-x_2u_0,\ x_1u_2-x_2u_1)^2}.
\]
Let $J=(x_0,x_1,x_2)$ and, for each prime number $p$, let
\[
\mathfrak m_p=(x_0,x_1,x_2,u_0,u_1,u_2,p).
\]
Then $R$ is non-reduced of dimension $5$, the ideal $J$ is a codimension $1$
prime ideal, and
\[
\operatorname{Supp}H^2_J(R)=\operatorname{Ass}H^2_J(R)
=\bigcup_p\{\mathfrak m_p\}.
\]
\end{proposition}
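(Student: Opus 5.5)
The approach is to compute $H^2_J(R)$ geometrically, in the spirit of Lemma~\ref{lem:cone-cohomology}. Put $U:=\operatorname{Spec}(R)-V(J)$. Since $H^i_J(R)\simeq H^{i-1}(U,\mathcal O_U)$ for $i\geq 2$, it suffices to compute $H^1(U,\mathcal O_U)$ together with the action of $x_i,u_i$ on it. The target is the identification $H^1(U,\mathcal O_U)\simeq \operatorname{coker}(d/dt:\mathbb Z[t]\to\mathbb Z[t]\,dt)\simeq\bigoplus_{r\geq1}\mathbb Z/r$, with every $x_i$ and $u_i$ acting by zero.

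\textbf{Preliminaries.} Let $I$ be the ideal of $2\times2$ minors of the matrix with rows $(x_0,x_1,x_2)$ and $(u_0,u_1,u_2)$. Then $R_{\mathrm{red}}=\mathbb Z[x,u]/I$ is the determinantal domain of dimension $5$. Hence $R$ is non-reduced, since $I\neq I^2$, and $\dim R=5$. Moreover $R/J=\mathbb Z[u]$ is a domain of dimension $4$, so $J$ is a prime of codimension $1$. I would check this last claim by noting that $V(J)$ is an irreducible divisor in the irreducible $5$-dimensional $\operatorname{Spec}(R_{\mathrm{red}})$.

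\textbf{Step 1: geometry of $U$.} On $U$ some $x_i$ is invertible, and there $I$ is generated by the $u_j-tx_j$ with $t=u_i/x_i$. Thus $U_{\mathrm{red}}\simeq(\mathbb A^3_{\mathbb Z}-0)\times\mathbb A^1_t$, embedded via $(x,t)\mapsto(x,tx)$. Moreover $U$ is the first-order thickening of $U_{\mathrm{red}}$, cut out by the square of its ideal. This gives an exact sequence
\[
0\to \mathcal C\to\mathcal O_U\to\mathcal O_{U_{\mathrm{red}}}\to0,
\]
where $\mathcal C=I/I^2$ is the conormal bundle. Computing the normal bundle of the graph of $(x,t)\mapsto tx$ gives $\mathcal N\simeq\mathcal O^3/\mathcal O\cdot x$ pulled back from $\mathbb A^3-0$, so $\mathcal C=\mathcal N^\vee=\ker(\mathcal O^3\xrightarrow{x}\mathcal O)$. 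Using $H^1(\mathbb A^3-0,\mathcal O)=0$ and $H^2(\mathbb A^3-0,\mathcal O)=H^3_{(x)}(\mathbb Z[x])$, the space of inverse polynomials, I would compute the following. The group $H^1(U,\mathcal C)$ vanishes, because $H^0(\mathcal O)^3\to H^0(\mathcal O)$ is onto the ideal $(x)$ and $H^1(\mathcal O)=0$. The group $H^0(U,\mathcal C)$ consists of Koszul syzygies, and $H^1(U,\mathcal O_{U_{\mathrm{red}}})=0$. Consequently
\[
H^1(U,\mathcal O_U)=\operatorname{coker}\bigl(H^0(\mathcal O_U)\to H^0(\mathcal O_{U_{\mathrm{red}}})=\mathbb Z[x][t]\bigr)\oplus(\text{possible }H^1(\mathcal C)\text{ term}).
\]
I must double-check the cohomology of $\mathcal C$ carefully, including its $\mathbb Z[t]$-linear structure.

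\textbf{Step 2: the main obstacle.} The main obstacle is identifying which functions on $U_{\mathrm{red}}$ lift to $U$, i.e.\ computing the obstruction map $\delta:\mathbb Z[x][t]\to H^1(U,\mathcal C)$. The expectation is as follows. The polynomials in $x,u$ lift, and they give $\mathbb Z[x][tx]$-type elements. A monomial $f(x)t^n$ lifts exactly when the derivation $\partial_t$ pairs with the conormal direction consistently, and $\delta(f t^n)$ is essentially $f\cdot n t^{n-1}dt$ paired with a socle-type class. I would organize the computation via the bigrading: $x$-degree $a$ and $u$-degree $b$, with $t$ of bidegree $(-1,1)$. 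A function $t^n$ has bidegree $(-n,n)$. For each $n$, lifting $t^n$ requires writing $u_i^n/x_i^n$ compatibly on the Čech cover $\{x_i\neq0\}$. The discrepancies $(u_i/x_i)^n-(u_j/x_j)^n$ lie in $I$ modulo $I^2$ and equal $n t^{n-1}$ times the minor divided by $x_ix_j$. Hence lifts of $f(x)t^n$ exist precisely when $n$ divides the required coefficient. This should give a cokernel in bidegree $(-n,n)$, and only in the $x$-degree where $f$ is constant, equal to $\mathbb Z/n$. That is the de Rham cokernel $\mathbb Z[t]dt/d\mathbb Z[t]$. I would verify this first for $n=p$ in a single bidegree as a sanity check, and then do general $n$.

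\textbf{Step 3: support.} The resulting module $\bigoplus_{n\geq1}\mathbb Z/n$ sits in bidegrees $(-n,n)$. Multiplication by $x_i$ raises the $x$-degree and lands in bidegrees where the module vanishes, so each $x_i$ acts by zero. Likewise $u_i=tx_i$ acts by zero, because $t$ acts as the derivative-compatible shift but is multiplied by $x_i$. Thus $H^2_J(R)$ is a module over $R/(x,u)=\mathbb Z$, isomorphic to $\bigoplus_n\mathbb Z/n$. Its support and its associated primes are therefore both $\{\mathfrak m_p\}$ for all primes $p$, and these are pairwise incomparable, hence all minimal. This proves $\operatorname{Supp}=\operatorname{Ass}=\bigcup_p\{\mathfrak m_p\}$.
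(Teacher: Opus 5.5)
Your overall strategy is the paper's, moved to $U=\operatorname{Spec}(R)-V(J)$. Your sequence $0\to\mathcal C\to\mathcal O_U\to\mathcal O_{U_{\mathrm{red}}}\to0$ is the sheafified form of the paper's $0\to N_r\to A_r\to B_r\to0$. Your identity $t_j^n-t_i^n=nt^{n-1}(t_j-t_i)$ modulo $I^2$ is exactly Lemma~\ref{lem:connecting-map}. However, Step~1 is wrong at the one place that matters, and the argument as written does not go through.

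\textbf{The error in Step~1.} The group $H^1(U,\mathcal C)$ does not vanish. From $0\to\mathcal C\to\mathcal O^3\xrightarrow{x}\mathcal O\to0$ and $H^1(\mathbb A^3-0,\mathcal O)=0$ you get $H^1(U,\mathcal C)\cong\mathbb Z[x]/(x_0,x_1,x_2)\otimes\mathbb Z[t]\cong\mathbb Z[t]$. Being onto the ideal $(x)$ means the cokernel is $\mathbb Z$, not $0$.

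Your displayed formula for $H^1(U,\mathcal O_U)$ also misreads the long exact sequence. Since $H^1(U_{\mathrm{red}},\mathcal O)=0$, the correct statement is $H^1(U,\mathcal O_U)\cong\operatorname{coker}\bigl(\delta\colon H^0(U_{\mathrm{red}},\mathcal O)\to H^1(U,\mathcal C)\bigr)$. The cokernel of $H^0(\mathcal O_U)\to H^0(\mathcal O_{U_{\mathrm{red}}})$ is the image of $\delta$, not a summand of $H^1$. Taken literally, your Step~1 gives $H^1(U,\mathcal O_U)=0$.

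Likewise, ``lifts of $f(x)t^n$ exist precisely when $n$ divides the coefficient'' is backwards. Over $\mathbb Z$ no nonzero multiple of $t^n$ lifts. What is true is that $\delta(t^n)$ is $n$ times a certain class.

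\textbf{The missing ingredient: primitivity.} You need $H^1(U,\mathcal C)$ in bidegree $(-n,n)$ to be $\mathbb Z$, generated by the \v{C}ech class of $(t^{n-1}(t_j-t_i))_{ij}$. Without this, $\delta(t^n)=n\cdot c$ tells you nothing about the cokernel, since $c$ could itself be divisible or zero. In the paper this is Lemma~\ref{lem:second-lc} together with Remark~\ref{rmk:explicit-cech-generator}: there $\widetilde{N_r}\cong\Omega^1_{\mathbb P^2_{\mathbb Z}}$, and $(c_{ij})$ is the Euler generator of $H^1(\mathbb P^2_{\mathbb Z},\Omega^1)\cong\mathbb Z$.

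In your setup the fix is short:
\begin{itemize}
\item Under $\mathcal C\cong\ker(\mathcal O^3\xrightarrow{x}\mathcal O)$ with $e_k\leftrightarrow du_k-t\,dx_k$, the minor $x_iu_j-x_ju_i$ corresponds to $x_ie_j-x_je_i$. Hence $t_j-t_i$ corresponds to $e_j/x_j-e_i/x_i$.
\item The connecting map of $0\to\mathcal C\to\mathcal O^3\to\mathcal O\to0$ sends $1$ to the class of exactly this cocycle: lift $1$ to $e_i/x_i$ on $D(x_i)$ and take differences.
\item Therefore $H^1(U,\mathcal C)=\mathbb Z[t]\cdot[t_j-t_i]\cong\mathbb Z[t]\,dt$, and $\delta$ is $d/dt$. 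This gives $H^1(U,\mathcal O_U)\cong\bigoplus_{n\ge1}\mathbb Z/n$.
\end{itemize}

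With that correction, your Step~3 is fine: $x_i$ and $u_i$ act by zero for bidegree reasons, so $\operatorname{Supp}=\operatorname{Ass}=\{\mathfrak m_p\}_p$.
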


\begin{proof}
Let $Q=\mathbb Z[x_0,x_1,x_2,u_0,u_1,u_2]$, and let $I\subset Q$ denote the
ideal of $2\times2$ minors. The ring $Q$ has dimension $7$, the ideal $I$ is
a height $2$ prime, while
$(x_0,x_1,x_2)$ is a height $3$ prime containing it. Since
$\operatorname{Spec}(Q/I^2)=\operatorname{Spec}(Q/I)$ as topological spaces,
it follows that $\dim(R)=5$ and that $J$ is a codimension $1$ prime. Moreover,
$I^2$ is not radical, so $R$ is non-reduced.

It is clear that $H^2_J(R)[1/x_i]=0$ for each $i$. If we invert $u_i$, then
the ideal $(x_0,x_1,x_2)$ is generated up to radical by $x_i$, and thus
$H^2_J(R)[1/u_i]=0$ for each $i$. In particular, every prime ideal in the
support of $H^2_J(R)$ contains
\[
P:=(x_0,x_1,x_2,u_0,u_1,u_2).
\]
Thus, to prove the proposition, it suffices to show that
\begin{enumerate}
\item[(i)] $H^2_J(R)$ has $p$-torsion for every prime number $p$;
\item[(ii)] $H^2_J(R)\otimes_{\mathbb Z}\mathbb Q=0$.
\end{enumerate}
In fact, (i) implies that there is an associated prime of $H^2_J(R)$ that
contains $p$, and this prime must be $\mathfrak m_p$. Condition (ii) implies
that $P\notin\operatorname{Supp}H^2_J(R)$, so the proposition follows. Both
claims follow from Lemma~\ref{lem:explicit-calculation} below.
\end{proof}

For the rest of this section, set
\[
S=\mathbb Z[x_0,x_1,x_2],
\qquad
T=S[u_0,u_1,u_2],
\]
and
\[
I=(x_iu_j-x_ju_i\mid0\leq i<j\leq2),
\qquad
J=(x_0,x_1,x_2).
\]
Thus $R=T/I^2$. Give $T$, and hence $R$, the bigrading
\[
\deg x_i=(1,0),
\qquad
\deg u_i=(1,1).
\]
For each $r\in\mathbb N$, denote the $u$-degree $r$ pieces by
\[
A_r=\bigoplus_a(T/I^2)_{(a,r)},
\qquad
B_r=\bigoplus_a(T/I)_{(a,r)},
\qquad
N_r=\bigoplus_a(I/I^2)_{(a,r)}.
\]
Thus $A_r,B_r,N_r$ are graded $S$-modules with respect to the first grading,
which is the standard grading on $S$. For each $r\in\mathbb N$ there is an
exact sequence
\[
0\longrightarrow N_r\longrightarrow A_r\longrightarrow B_r
\longrightarrow0.
\]
We shall compute the degree-zero part, with respect to the first grading, of
the connecting map
\[
\delta_r:[H^1_J(B_r)]_0\longrightarrow[H^2_J(N_r)]_0.
\]
With this notation,
\[
[H^1_J(B_r)]_0=H^1_J(T/I)_{(0,r)},
\qquad
[H^2_J(N_r)]_0=H^2_J(I/I^2)_{(0,r)},
\]
and
\[
H^2_J(R)=\bigoplus_{r\in\mathbb N}H^2_J(A_r).
\]

\begin{lemma}\label{lem:first-lc}
There are isomorphisms $H^1_J(B_r)\simeq S/J^r$ of graded $S$-modules and
$H^2_J(B_r)=0$. In particular, $[H^1_J(B_r)]_0\simeq\mathbb Z$ for each
$r\geq1$, and an explicit \v{C}ech representative of the generator is
\[
\left(\frac{u_0^r}{x_0^r},
\frac{u_1^r}{x_1^r},
\frac{u_2^r}{x_2^r}\right).
\]
\end{lemma}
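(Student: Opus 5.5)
The plan is to identify $T/I$ geometrically and read off $B_r$ as a module of sections on the punctured cone. The ring $T/I$ is the Segre-type ring: $\mathbb Z[x_0,x_1,x_2,u_0,u_1,u_2]$ modulo the $2\times 2$ minors of $\begin{pmatrix} x_0&x_1&x_2\\ u_0&u_1&u_2\end{pmatrix}$. This is the coordinate ring of the rank $\le 1$ matrices, i.e.\ of $\mathbb A^3\times\mathbb A^2$ modulo scaling. Concretely,
\[
T/I\simeq \mathbb Z[x_0s,x_1s,x_2s,x_0t,x_1t,x_2t]\subset \mathbb Z[x_0,x_1,x_2,s,t],
\qquad x_i\mapsto x_is,\quad u_i\mapsto x_it .
\]
After re-indexing, $T/I\simeq\bigoplus_{a\ge b\ge 0}S_a\, s^{a-b}t^{b}$ (a standard determinantal fact: $I$ is prime and $T/I$ is the Segre product). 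In the bigrading $\deg x_i=(1,0)$, $\deg u_i=(1,1)$, the piece of $u$-degree $r$ is spanned by the monomials $x^\alpha s^{a-r}t^{r}$ with $|\alpha|=a\ge r$. Hence, as a graded $S$-module,
\[
B_r\simeq \bigoplus_{a\ge r} S_a = J^r(r)
\]
(up to the shift convention making the generators $u^\beta$, $|\beta|=r$, correspond to the degree-$r$ monomials $x^\beta$). Indeed, the map $B_r\to S$, $u^\beta x^\gamma\mapsto x^{\beta+\gamma}$, is injective with image $J^r$; this is just the statement that $T/I\to S[t]$, $u_i\mapsto x_it$, is injective with image the Rees algebra of $J$. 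I will verify this by checking that $T/I$ is a domain of the right dimension (or by citing the standard fact that the Rees algebra of the maximal-type ideal $J$ in $S$ is $T/I$, since $J$ is generated by a regular sequence).

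With $B_r\simeq J^r$ (suitably shifted), the local cohomology comes from the sequence $0\to J^r\to S\to S/J^r\to 0$. Since $J$ is generated by the regular sequence $x_0,x_1,x_2$, we have $H^0_J(S)=H^1_J(S)=H^2_J(S)=0$, and $S/J^r$ is $J$-torsion. The long exact sequence therefore gives
\[
H^0_J(J^r)=0,\qquad H^1_J(J^r)\simeq S/J^r,\qquad H^2_J(J^r)\simeq H^1_J(S/J^r)=0 .
\]
Tracking the grading shift, $H^1_J(B_r)\simeq S/J^r$, with the unit of $S/J^r$ sitting in degree $0$ of $H^1_J(B_r)$. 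Thus $[H^1_J(B_r)]_0\simeq (S/J^r)_0=\mathbb Z$ for $r\ge1$.

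For the \v{C}ech representative, the connecting map $S/J^r=H^0_J(S/J^r)\to H^1_J(J^r)$ sends $1$ to the class of the \v{C}ech $0$-cochain obtained by lifting $1$ locally into $J^r$. On $D(x_i)$ we have $J^r=S$, and under our identification the element $x_i^r\in J^r$ corresponds to $u_i^r$. So the cochain "$1$" restricted to $D(x_i)$ is $u_i^r/x_i^r$, and its differences $u_i^r/x_i^r-u_j^r/x_j^r$ vanish in $T/I$ (they come from $u_ix_j=u_jx_i$). Hence the triple $\bigl(u_0^r/x_0^r,u_1^r/x_1^r,u_2^r/x_2^r\bigr)$ is a cocycle representing the generator, and it lies in bidegree $(0,r)$.

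The main obstacle I expect is the careful identification $T/I\simeq$ Rees algebra over $\mathbb Z$, namely primeness and flatness of the determinantal ideal over $\mathbb Z$ rather than over a field, together with the degree bookkeeping. The integral statement follows from the field case plus $\mathbb Z$-torsion-freeness, both of which are classical: Hochster–Eagon, or a Gröbner basis argument that works over $\mathbb Z$.
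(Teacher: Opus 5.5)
Your proof is correct and is essentially the paper's argument: identify $T/I$ with the Rees algebra $S[Jt]$ via $u_i\mapsto x_it$ so that $B_r\simeq J^r$, read off $H^1_J(B_r)\simeq S/J^r$ and $H^2_J(B_r)=0$ from $0\to J^r\to S\to S/J^r\to0$, and obtain the \v{C}ech representative from the connecting map. One small slip: with $\deg u_i=(1,1)$ there is no shift at all, since $\bigoplus_{a\ge r}S_a$ is exactly $J^r$ with its natural grading, whereas $J^r(r)$ would put the unit of $S/J^r$ in degree $-r$; your later degree bookkeeping, which places that unit in degree $0$, is the correct one.
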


\begin{proof}
There is an identification $T/I\simeq S[Jt]$ sending $u_i$ to $x_it$, and
this isomorphism preserves the $u$-degree on $T/I$ and the $t$-degree on
$S[Jt]$. In particular, $B_r\simeq J^r$ as graded $S$-modules. The lemma
follows from the long exact sequence on local cohomology induced by
$0\to J^r\to S\to S/J^r\to0$.
\end{proof}

\begin{lemma}\label{lem:second-lc}
For each $r\geq1$, the coherent sheaf $\widetilde{N_r}$ on
$\operatorname{Proj}(S)=\mathbb P^2_{\mathbb Z}$ is isomorphic to
$\Omega^1_{\mathbb P^2_{\mathbb Z}}$, and
\[
H^2_J(N_r)\simeq[H^2_J(N_r)]_0
\simeq H^1(\mathbb P^2_{\mathbb Z},
\Omega^1_{\mathbb P^2_{\mathbb Z}})
\simeq\mathbb Z.
\]
\end{lemma}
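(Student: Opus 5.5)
The plan is to compute $\widetilde{N_r}$ chart by chart on $\mathbb P^2_{\mathbb Z}=\operatorname{Proj}(S)$. We then identify the result with $\Omega^1$ by comparing transition data with a presentation of $\Omega^1$ coming from the Euler sequence. Finally, we convert to local cohomology with the graded analogue of Lemma~\ref{lem:cone-cohomology}.

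\emph{The chart computation.} On the chart $D_+(x_0)$, the ideal $I$ localizes to the ideal generated by the two elements
\[
u_1-\tfrac{x_1}{x_0}u_0,\qquad u_2-\tfrac{x_2}{x_0}u_0 .
\]
These form a regular sequence, since they are linear in $u_1,u_2$ with unit coefficients. The third minor is a combination of these two. Hence $(I/I^2)_{x_0}$ is free of rank $2$ over
\[
(T/I)_{x_0}\simeq S_{x_0}[u_0],
\]
and its generators are the images of the minors $m_{01}=x_0u_1-x_1u_0$ and $m_{02}=x_0u_2-x_2u_0$.

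\emph{Passing to $N_r$.} The graded piece of $T/I$ of $u$-degree $r-1$ is generated over the chart by $u_0^{r-1}$, by Lemma~\ref{lem:first-lc} ($B_{r-1}\simeq J^{r-1}$). So on $D_+(x_0)$ the sheaf $\widetilde{N_r}$ is free of rank $2$ on
\[
\frac{m_{0j}\,u_0^{r-1}}{x_0^{r+1}},\qquad j=1,2,
\]
which are the elements of bidegree $(0,r)$. On the overlaps, we express these basis elements through the relation $x_0m_{12}-x_1m_{02}+x_2m_{01}=0$, together with $u_i/u_0=x_i/x_0$ in $T/I$. This shows that multiplication by $u_0/x_0$ (and its analogues $u_i/x_i$) gives isomorphisms $\widetilde{N_1}\simeq\widetilde{N_r}$ that are compatible on overlaps. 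So it suffices to treat $r=1$.

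\emph{The case $r=1$.} Globally, $N_1$ is generated by the three minors, each of first degree $2$. The relations among them, restricted to $V(I)$ where $u=\lambda x$, are generated by the single Koszul relation $\sum\pm x_k m_{ij}=0$. This gives a presentation
\[
0\to\mathcal O(-3)\xrightarrow{(x_0,-x_1,x_2)}\mathcal O(-2)^{3}\to\widetilde{N_1}\to 0 .
\]
The cokernel is $T_{\mathbb P^2}(-3)$ by the Euler sequence, and $T_{\mathbb P^2}(-3)\simeq\Omega^1_{\mathbb P^2}$ via $\Omega^1\otimes\omega^{-1}\simeq T$ in rank $2$. I would double-check the twist directly by comparing the chart generators above with the basis $d(x_j/x_0)$ of $\Omega^1$.

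\emph{From the sheaf to local cohomology.} As in Lemma~\ref{lem:cone-cohomology}, but for a graded module, there is an isomorphism
\[
H^2_J(N_r)\simeq\bigoplus_{n\in\mathbb Z}H^1\bigl(\mathbb P^2_{\mathbb Z},\widetilde{N_r}(n)\bigr).
\]
This holds because $H^{i}_J(M)\simeq\bigoplus_n H^{i-1}(\mathbb P^2,\widetilde M(n))$ for $i\ge2$ and any finitely generated graded $S$-module $M$. By Bott's formula over $\mathbb Z$, which can be verified directly from the Euler sequence and cohomology of $\mathcal O(n)$ on $\mathbb P^2_{\mathbb Z}$, we have $H^1(\mathbb P^2_{\mathbb Z},\Omega^1(n))=0$ for $n\neq0$ and $H^1(\mathbb P^2_{\mathbb Z},\Omega^1)\simeq\mathbb Z$. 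Hence $H^2_J(N_r)$ is concentrated in degree $0$ and equals $\mathbb Z$.

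The main obstacle is the bookkeeping in the identification $\widetilde{N_r}\simeq\Omega^1$: getting the exact twist right and confirming that the Koszul relation generates all relations among the minors modulo $I^2$. The latter should follow from the freeness on charts established in the chart computation.
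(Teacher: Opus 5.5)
Your proposal is correct and follows the paper's argument. You use freeness of $(I/I^2)_{x_i}$ on two minors over $(T/I)_{x_i}$, reduce from $r$ to $r=1$ by multiplying by $q_i^{r-1}$ (this is well defined on overlaps because $q_i=q_j$ in $(T/I)_{x_ix_j}$ and $I/I^2$ is a $T/I$-module), and finish with the graded Serre correspondence plus the Euler sequence, exactly as the paper does. The only deviation is at $r=1$: you identify $\widetilde{N_1}$ as the cokernel of the Koszul map $\mathcal O(-3)\to\mathcal O(-2)^{3}$, i.e.\ $T_{\mathbb P^2}(-3)\simeq\Omega^1$, instead of the paper's explicit chartwise isomorphism $\Phi_i(q_i^{r-1}\varepsilon_{ik})=d(x_k/x_i)$. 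That suffices for this lemma, although the explicit $\Phi_i$ is what the paper later uses to pin down the \v{C}ech generator for the connecting map.
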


\begin{proof}
We construct an explicit isomorphism
$\widetilde{N_r}\simeq\Omega^1_{\mathbb P^2_{\mathbb Z}}$. Set
\[
U_i=D_+(x_i),
\qquad
q_i=\frac{u_i}{x_i},
\qquad
\varepsilon_{ik}=\frac{x_iu_k-x_ku_i}{x_i^2}
\quad(k\neq i).
\]
The elements $q_i$ and $\varepsilon_{ik}$ have bidegree $(0,1)$. After
localizing at $x_i$, the ideal $I_{x_i}\subset T_{x_i}$ is generated by the
two elements $\varepsilon_{ik}$. It follows that
\[
(T/I^2)_{x_i}\simeq
S_{x_i}[q_i,\varepsilon_{ik}\mid k\neq i]/
(\varepsilon_{ik}\varepsilon_{i\ell}\mid k,\ell\neq i)
\]
and
\[
(I/I^2)_{x_i}\simeq
\bigoplus_{k\neq i}S_{x_i}[q_i]\cdot\varepsilon_{ik}.
\]
In particular,
\[
(N_r)_{x_i}\simeq
\bigoplus_{k\neq i}S_{x_i}\cdot q_i^{r-1}\varepsilon_{ik},
\]
and hence
\[
[(N_r)_{x_i}]_0\simeq
\bigoplus_{k\neq i}\mathcal O_{\mathbb P^2_{\mathbb Z}}(U_i)
\cdot q_i^{r-1}\varepsilon_{ik}.
\]
On $U_i$, define an isomorphism
\[
\Phi_i:[(N_r)_{x_i}]_0
\longrightarrow\Omega^1_{\mathbb P^2_{\mathbb Z}}(U_i)
\]
by
\[
\Phi_i(q_i^{r-1}\varepsilon_{ik})
=d\left(\frac{x_k}{x_i}\right).
\]
These local isomorphisms glue. For $r=1$, this follows directly from the
expression for $\varepsilon_{ik}$. For $r\geq2$, note that on
$U_i\cap U_j$ the difference $q_j-q_i$ belongs to $I/I^2$, and hence
$q_j^{r-1}\eta=q_i^{r-1}\eta$ for every $\eta\in I/I^2$. Thus multiplication
by $q_i^{r-1}$ gives a well-defined global identification of
$\widetilde{N_r}$ with
$\widetilde{N_1}\simeq\Omega^1_{\mathbb P^2_{\mathbb Z}}$.

By the module version of Lemma~\ref{lem:cone-cohomology},
\[
H^2_J(N_r)\simeq
\bigoplus_{n\in\mathbb Z}
H^1(\mathbb P^2_{\mathbb Z},
\Omega^1_{\mathbb P^2_{\mathbb Z}}(n)).
\]
The Euler sequence
\[
0\longrightarrow\Omega^1_{\mathbb P^2_{\mathbb Z}}
\longrightarrow\mathcal O_{\mathbb P^2_{\mathbb Z}}(-1)^{\oplus3}
\longrightarrow\mathcal O_{\mathbb P^2_{\mathbb Z}}
\longrightarrow0
\]
now gives the assertion.
\end{proof}

\begin{remark}\label{rmk:explicit-cech-generator}
We can identify an explicit generator of $[H^2_J(N_r)]_0$. Using the notation
of Lemma~\ref{lem:second-lc}, consider the \v{C}ech cocycle
\[
c_{ij}=q_i^{r-1}(q_j-q_i)
\qquad\text{on }U_i\cap U_j.
\]
Under the isomorphism
$\widetilde{N_r}\simeq\Omega^1_{\mathbb P^2_{\mathbb Z}}$, the cocycle
$(c_{ij})$ corresponds to the usual Euler generator of
$H^1(\mathbb P^2_{\mathbb Z},\Omega^1_{\mathbb P^2_{\mathbb Z}})$. Indeed,
\[
q_j-q_i
=\frac{u_j}{x_j}-\frac{u_i}{x_i}
=\frac{x_iu_j-x_ju_i}{x_ix_j}
=\frac{x_i}{x_j}\varepsilon_{ij},
\]
so
\[
c_{ij}=\frac{x_i}{x_j}q_i^{r-1}\varepsilon_{ij}.
\]
Applying $\Phi_i$ gives
\[
\Phi_i(c_{ij})
=\frac{x_i}{x_j}d\left(\frac{x_j}{x_i}\right)
=d\log\left(\frac{x_j}{x_i}\right),
\]
which is the standard Euler generator.
\end{remark}

\begin{lemma}\label{lem:connecting-map}
For each $r\geq 1$, the connecting map
\[
\delta_r:\mathbb Z\simeq[H^1_J(B_r)]_0
\longrightarrow[H^2_J(N_r)]_0\simeq\mathbb Z
\]
is multiplication by $r$.
\end{lemma}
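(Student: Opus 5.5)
We compute $\delta_r$ directly on \v{C}ech representatives, working with the \v{C}ech complex of $A_r$, $B_r$, $N_r$ with respect to $x_0,x_1,x_2$. By Lemma~\ref{lem:first-lc}, the generator of $[H^1_J(B_r)]_0$ is the class of the $1$-cochain $(u_0^r/x_0^r,u_1^r/x_1^r,u_2^r/x_2^r)$, i.e.\ $(q_0^r,q_1^r,q_2^r)$ with $q_i^r$ on $U_i$. To apply the snake-lemma recipe, we lift this cochain to $A_r$. The lift is the same formula, $q_i^r\in[(A_r)_{x_i}]_0$, since $q_i=u_i/x_i$ makes sense in $(T/I^2)_{x_i}$. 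Its \v{C}ech coboundary is the $2$-cochain $(q_j^r-q_i^r)_{i<j}$ on $U_i\cap U_j$. This maps to zero in $B_r$ because $q_i=q_j$ in $(T/I)_{x_ix_j}$, so it lies in $N_r$ and represents $\delta_r$ of the generator.

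The key algebraic step uses the fact that $q_j-q_i\in I/I^2$ and $(I/I^2)^2=0$ in $T/I^2$ to expand
\[
q_j^r-q_i^r=(q_i+(q_j-q_i))^r-q_i^r=r\,q_i^{r-1}(q_j-q_i)
\]
in $(A_r)_{x_ix_j}$. All higher binomial terms involve $(q_j-q_i)^2\in I^2=0$. Thus the cocycle equals $r\cdot c_{ij}$, where $c_{ij}=q_i^{r-1}(q_j-q_i)$ is the cocycle of Remark~\ref{rmk:explicit-cech-generator}.

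Finally, we identify classes. The \v{C}ech $2$-cocycle $(c_{ij})$ in the \v{C}ech complex of $N_r$ corresponds, under the isomorphism $[H^2_J(N_r)]_0\simeq H^1(\mathbb P^2_{\mathbb Z},\widetilde{N_r})$ of Lemma~\ref{lem:cone-cohomology} (a shift by one in \v{C}ech degree), to the \v{C}ech $1$-cocycle $(c_{ij})$ for the cover $\{U_i\}$. By Remark~\ref{rmk:explicit-cech-generator}, this is the Euler generator $d\log(x_j/x_i)$ of $H^1(\mathbb P^2_{\mathbb Z},\Omega^1)\simeq\mathbb Z$. Hence $\delta_r(1)=r\cdot(\text{generator})$, so $\delta_r$ is multiplication by $r$, up to a sign depending on conventions, which does not matter.

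The main obstacle is bookkeeping in this identification. We must check that the connecting map for local cohomology in \v{C}ech terms is compatible with the sheaf-cohomology connecting maps under Lemma~\ref{lem:cone-cohomology}, including the $H^1_J\leftrightarrow H^0$ and $H^2_J\leftrightarrow H^1$ shifts, so that no extra factor or sign is introduced. We also need that the Euler class $d\log(x_j/x_i)$ genuinely generates $H^1(\mathbb P^2_{\mathbb Z},\Omega^1)\simeq\mathbb Z$, not merely a finite-index subgroup. For this we would use the Euler sequence from Lemma~\ref{lem:second-lc}: its connecting map $H^0(\mathcal O)\to H^1(\Omega^1)$ is an isomorphism sending $1$ to exactly this cocycle.
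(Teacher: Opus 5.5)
Your proposal is correct and is essentially the paper's proof. You lift the \v{C}ech cocycle $(q_i^r)$ to $A_r$, take its coboundary, use $(q_j-q_i)^2=0$ in $(T/I^2)_{x_ix_j}$ to get $q_j^r-q_i^r=r\,q_i^{r-1}(q_j-q_i)=r\,c_{ij}$, and compare with the generator in Remark~\ref{rmk:explicit-cech-generator}. The bookkeeping points you flag (compatibility of the \v{C}ech connecting map with the shift in Lemma~\ref{lem:cone-cohomology}, and the fact that the Euler-sequence connecting map $H^0(\mathcal O)\to H^1(\Omega^1_{\mathbb P^2_{\mathbb Z}})$ is an isomorphism sending $1$ to $d\log(x_j/x_i)$) are left implicit in the paper, and your resolutions of them are right.
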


\begin{proof}
Using the notation of Lemma~\ref{lem:second-lc}, the connecting map sends the
class represented by the \v{C}ech cocycle $(q_0^r,q_1^r,q_2^r)$ to the class
represented by
\[
(q_1^r-q_0^r,\ q_2^r-q_1^r,\ q_0^r-q_2^r)
\in[H^2_J(N_r)]_0.
\]
Since $q_j-q_i\in(I/I^2)_{x_ix_j}$, its square is zero in
$(T/I^2)_{x_ix_j}$, and therefore
\[
q_j^r-q_i^r
=(q_i+(q_j-q_i))^r-q_i^r
=r q_i^{r-1}(q_j-q_i).
\]
Comparison with the explicit generator in
Remark~\ref{rmk:explicit-cech-generator} completes the proof.
\end{proof}

\begin{lemma}\label{lem:explicit-calculation}
The module $H^2_J(R)$ contains $p$-torsion for every prime number $p$, while
$H^2_J(R)\otimes_{\mathbb Z}\mathbb Q=0$.
\end{lemma}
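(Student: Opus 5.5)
The plan is to decompose $H^2_J(R)=\bigoplus_{r\in\mathbb N}H^2_J(A_r)$ and compute each summand from the long exact sequence of local cohomology attached to $0\to N_r\to A_r\to B_r\to 0$. Since all terms are graded $S$-modules and $J$ is homogeneous, the sequence splits by first degree. For $r=0$ we have $N_0=0$ and $A_0=B_0=S$, so $H^2_J(A_0)=H^2_J(S)$. This vanishes because $J$ is generated by a regular sequence of length $3$ in $S$, so $H^i_J(S)=0$ for $i\neq 3$.

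For $r\geq1$ we use the two preceding lemmas. Lemma~\ref{lem:first-lc} gives $H^2_J(B_r)=0$, and Lemma~\ref{lem:second-lc} gives $H^2_J(N_r)$ concentrated in first degree $0$, isomorphic to $\mathbb Z$. The relevant portion of the long exact sequence is
\[
H^1_J(B_r)\xrightarrow{\ \delta\ } H^2_J(N_r)\longrightarrow H^2_J(A_r)\longrightarrow H^2_J(B_r)=0.
\]
Hence $H^2_J(A_r)\simeq\operatorname{coker}(\delta)$. Because $H^2_J(N_r)$ lives only in degree $0$, only the degree-$0$ part $\delta_r$ of $\delta$ matters. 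By Lemma~\ref{lem:connecting-map}, $\delta_r$ is multiplication by $r$ on $\mathbb Z$, so $H^2_J(A_r)\simeq\mathbb Z/r\mathbb Z$. Altogether this gives
\[
H^2_J(R)\simeq\bigoplus_{r\geq1}\mathbb Z/r\mathbb Z
\]
as abelian groups.

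Both claims follow at once. Taking $r=p$ gives a copy of $\mathbb Z/p\mathbb Z$, so $H^2_J(R)$ has $p$-torsion for every prime $p$. Every summand is torsion, so $H^2_J(R)\otimes_{\mathbb Z}\mathbb Q=0$.

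The main obstacle is making sure the identifications of $[H^1_J(B_r)]_0$ and $[H^2_J(N_r)]_0$ with $\mathbb Z$ are compatible with the generators used in Lemma~\ref{lem:connecting-map}. The source generator is the Čech class $(u_i^r/x_i^r)=(q_i^r)$ from Lemma~\ref{lem:first-lc}. The target generator is the Euler class of Remark~\ref{rmk:explicit-cech-generator}, and the identification runs through the shift in Čech degree supplied by Lemma~\ref{lem:cone-cohomology}. Since that lemma has already been proved in this form, here I would only check that the zero terms vanish in the remaining degrees, which holds because both $H^2_J(N_r)$ and $H^2_J(B_r)$ are explicitly known.
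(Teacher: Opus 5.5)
Your proposal is correct and follows the paper's proof essentially verbatim. You decompose $H^2_J(R)=\bigoplus_r H^2_J(A_r)$, use $H^2_J(B_r)=0$ and the concentration of $H^2_J(N_r)$ in degree $0$ to get $H^2_J(A_r)\simeq\operatorname{coker}(\delta_r)\simeq\mathbb Z/r\mathbb Z$ for $r\geq1$, and dispose of $r=0$ via $H^2_J(S)=0$; your extra justifications (that $N_0=0$, the regular-sequence vanishing, and that the cokernel of a map $\mathbb Z\to\mathbb Z$ does not depend on the chosen generators, so your compatibility worry is already absorbed by Lemma~\ref{lem:connecting-map}) are all sound.
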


\begin{proof}
Consider the long exact sequence of local cohomology induced by
$0\to N_r\to A_r\to B_r\to0$. Lemmas~\ref{lem:first-lc},
\ref{lem:second-lc}, and~\ref{lem:connecting-map} give
\[
H^2_J(A_r)\simeq[H^2_J(A_r)]_0
\simeq\operatorname{coker}\left(
[H^1_J(B_r)]_0\xrightarrow{\delta_r}[H^2_J(N_r)]_0\right)
\simeq\mathbb Z/r\mathbb Z
\]
for each $r\geq1$. When $r=0$, we have
$H^2_J(A_r)=H^2_J(S)=0$. Consequently,
\[
H^2_J(R)\simeq
\bigoplus_{r\in\mathbb N}H^2_J(A_r)
\simeq\bigoplus_{r\geq1}\mathbb Z/r\mathbb Z,
\]
and the lemma follows.
\end{proof}

\begin{remark}
The $p$-torsion elements in local cohomology have been studied in \cite{Sin00}, where the first example of local cohomology with infinitely many associated primes was obtained. For a related purpose, the local cohomology of $T$ supported in $I$ was also studied in \cite{Sin00}.
\end{remark}

\bibliographystyle{alpha}
\bibliography{refs}

\end{document}